\newtheorem{theorem}{Theorem}
\newtheorem{remark}{Remark}  
\newtheorem{definition}{Definition}  
\newtheorem{lemma}{Lemma}  
\newtheorem{proposition}{Proposition}
\newtheorem{example}{Example}  
\def\loc{\hbox{\normalfont\scriptsize loc}}
\def\L{L}
\def\R{{\mathbb R}}
\def\eps{\varepsilon}
\title[Threshold solutions]{Threshold and strong threshold solutions \\
     of a semilinear parabolic equation}
\author{Pavol Quittner}
\address{Department of Applied Mathematics and Statistics, Comenius University
         Mlynsk\'a dolina, 84248 Bratislava, Slovakia}
\email{quittner@fmph.uniba.sk}
\begin{document}

\begin{abstract}
If $p>1+2/n$ then 
the equation $u_t-\Delta u = u^p, \quad x\in\R^n,\ t>0,$
possesses both positive global solutions
and positive solutions which blow up in finite time.
We study the large time behavior of radial positive solutions lying 
on the borderline between global existence and blow-up.
\end{abstract}

\maketitle
%------------------------------------------------------------------
\section{Introduction} \label{sec-intro}

In this paper we consider positive classical 
solutions of the Cauchy problem
\begin{equation} \label{Fuj}
\left\{\quad\begin{aligned}
u_t-\Delta u &= u^p &\qquad& x\in\R^n,\ t>0, \\
u(x,0) &= u_0(x), &\qquad& x\in\R^n,
\end{aligned}\right.
\end{equation}
with $p>p_F:=1+2/n$ and $u_0\in BC^+\setminus\{0\}$, where
$BC$ denotes the space of bounded continuous 
functions in $\R^n$ and $BC^+:=\{\phi\in BC:\phi\geq0\}$.
We will study the large-time behavior of solutions
lying on the borderline between global existence and blow-up. 
Positive solutions of \eqref{Fuj}
which blow up in finite time exist for all $p>1$. 
For such $p$,
the assumption $p>p_F$ is necessary and sufficient for the existence
of positive global solutions.

In addition to the exponent $p_F$ we will often work with
the following critical exponents:
$$ \begin{aligned}
 p_{sg} &:=\begin{cases}\ +\infty       & \hbox{ if }n\leq2, \\
                       1+\frac2{n-2} & \hbox{ if }n\geq3,
           \end{cases} \qquad
 p_S :=\begin{cases}\ +\infty       & \hbox{ if }n\leq2, \\
                       1+\frac4{n-2} & \hbox{ if }n\geq3,
           \end{cases} \\
 p_{JL} &:=\begin{cases}\ +\infty & \hbox{ if }n\leq10, \\
         1+4\frac{n-4+2\sqrt{n-1}}{(n-2)(n-10)} & \hbox{ if }n>10.
          \end{cases}
\end{aligned}
$$
If $p>p_{sg}$ then the function
\begin{equation} \label{Ustar}
u_*(x):={\L}|x|^{-2/(p-1)}, \quad\hbox{where}\quad
{\L}^{p-1}:=\frac2{(p-1)^2}\bigl((n-2)p-n\bigr),
\end{equation}
is a singular steady state of \eqref{Fuj}.
Notice that $L=L(n,p)\to0$ as $p\to p_{sg}+$;
we set $L=L(n,p):=0$ if $p\leq p_{sg}$.
If $p\geq p_S$ then for each $\alpha>0$ there exists a unique positive 
radially symmetric and radially decreasing steady state $u_\alpha$
 of \eqref{Fuj} satisfying $u_\alpha(0)=\alpha$. These solutions
intersect each other (and the singular steady state $u_*$)
if and only if $p<p_{JL}$.
It was shown in \cite{GNW92} that the following is true:

\begin{proposition} \label{prop-GNW}
\strut
\hbox{\rm(i)} Let $p\in[p_S,p_{JL})$ and $\alpha>0$.
If $u_0\geq u_\alpha$ and $u_0\not\equiv u_\alpha$ then
the solution of \eqref{Fuj} blows up in finite time.
If $u_0\leq u_\alpha$ and $u_0\not\equiv u_\alpha$ then
the solution of \eqref{Fuj} exists globally and
$\|u(\cdot,t)\|_\infty\to0$ as $t\to\infty$.

\strut\hbox{\rm(ii)} Let $p\geq p_{JL}$ and $\alpha>0$.
If $u_0\geq \lambda u_\alpha$ for some $\lambda>1$ then
the solution of \eqref{Fuj} blows up in finite time.
If $u_0\leq \lambda u_\alpha$ for some $\lambda\in(0,1)$ then
the solution of \eqref{Fuj} exists globally and
$\|u(\cdot,t)\|_\infty\to0$ as $t\to\infty$.
On the other hand, the solution $u_\alpha$ is stable
in a suitable weighted Lebesgue space, and, in particular,
the properties in \hbox{\rm(i)} do not hold. 
\end{proposition}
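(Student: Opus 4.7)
\medskip

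\noindent\textbf{Proof plan.}
The common strategy is to use parabolic comparison with $u_\alpha$ or $\lambda u_\alpha$ as a sub- or supersolution, promote the comparison to strict ordering via the strong maximum principle, and then invoke the contrasting spectral and ordering properties of the steady-state family in the two regimes. The key observation is that $u_\alpha^{p-1}(r) \sim L^{p-1} r^{-2}$ as $r\to\infty$, so by Hardy's inequality the linearisation $L_\alpha := -\Delta - p u_\alpha^{p-1}$ has a negative bound-state eigenvalue precisely when $p L^{p-1} > (n-2)^2/4$, which is equivalent to $p < p_{JL}$. This single threshold condition drives the whole dichotomy.

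For Part~(i), blow-up case, assume $u_0 \geq u_\alpha$ and $u_0 \not\equiv u_\alpha$. Comparison gives $u \geq u_\alpha$, and the strong maximum principle applied to $w := u - u_\alpha$ makes the inequality strict for $t>0$; convexity of $s\mapsto s^p$ yields
\begin{equation*}
w_t - \Delta w \;\geq\; p\, u_\alpha^{p-1}\, w,\qquad w>0.
\end{equation*}
If $u$ were global, one may assume uniform boundedness (using standard a priori bounds for radial solutions with $p>p_F$); testing $w$ against a positive radial eigenfunction $\varphi$ of $L_\alpha$ associated with eigenvalue $-\mu_0 < 0$ produces the inequality $\tfrac{d}{dt}\int w\varphi \geq \mu_0 \int w\varphi$, forcing $\int w\varphi \to \infty$ exponentially, a contradiction. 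For the decay case, $u_0 \leq u_\alpha$ forces $u \leq u_\alpha$, hence global existence and boundedness. Every $\omega$-limit point of $u(\cdot,t)$ in $L^\infty_\loc$ is a nonnegative radial steady state $\omega$ satisfying $\omega \leq u_\alpha$; but in the range $p_S\leq p<p_{JL}$ any two distinct positive radial steady states must intersect, so no $u_\gamma$ with $\gamma>0$ sits under $u_\alpha$ pointwise, leaving only $\omega\equiv 0$.

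For Part~(ii), a direct calculation shows that $\lambda u_\alpha$ is a strict subsolution of \eqref{Fuj} when $\lambda>1$ and a strict supersolution when $\lambda\in(0,1)$, since $-\Delta(\lambda u_\alpha) = \lambda u_\alpha^p \neq (\lambda u_\alpha)^p$. For the blow-up assertion, $u\geq \lambda u_\alpha$ on the lifespan. If $u$ were global and bounded, the monotone flow starting from the subsolution $\lambda u_\alpha$ would converge to a radial steady state $\geq \lambda u_\alpha$; but for $p\geq p_{JL}$ the family $\{u_\beta\}_{\beta>0}$ is totally ordered and $u_\beta(r)<u_*(r)\sim L r^{-2/(p-1)}$ at infinity, whereas $\lambda u_\alpha(r)\sim \lambda L r^{-2/(p-1)}>u_*(r)$, so no such steady state exists and blow-up follows. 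The decay case is symmetric: $u\leq \lambda u_\alpha$ yields global existence and an $\omega$-limit steady state $\omega\leq \lambda u_\alpha$, and the same asymptotic comparison (now $u_\gamma\sim u_*>\lambda u_\alpha$ at infinity for any $\gamma>0$) excludes positive steady states under $\lambda u_\alpha$, forcing $\omega\equiv 0$.

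The main obstacle is the spectral step in Part~(i): constructing a positive eigenfunction $\varphi$ of $L_\alpha$ with sharp enough decay that $\int w\varphi$ is finite for the putative bounded global solution, and converting the distributional differential inequality into the scalar ODE rigorously on $\R^n$. Part~(ii) is essentially ordering-based; there the subtle point is upgrading the lower bound $u\geq \lambda u_\alpha$ from mere non-decay to actual finite-time blow-up, which requires combining the monotone iteration along the subsolution with a quantitative estimate (e.g.\ Kaplan/Levine-type energy) to rule out grow-up.
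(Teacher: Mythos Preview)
The paper does not prove this proposition: it is quoted verbatim as a result of Gui--Ni--Wang \cite{GNW92} (see the sentence ``It was shown in \cite{GNW92} that the following is true''), so there is no in-paper proof to compare your attempt against. Your plan is a sketch toward the original GNW argument, and its broad architecture---comparison with $u_\alpha$ or $\lambda u_\alpha$, followed by a spectral/ordering dichotomy governed by whether $p<p_{JL}$---is the right one.

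That said, several of the gaps you flag are real and not merely technical. In Part~(i), the assertion ``one may assume uniform boundedness (using standard a priori bounds for radial solutions with $p>p_F$)'' is false in the range $p\geq p_S$: no universal $L^\infty$ bound is available, and grow-up is a genuine alternative to blow-up that your eigenfunction inequality $\frac{d}{dt}\int w\varphi\geq\mu_0\int w\varphi$ does not by itself exclude. The actual GNW argument does not pass through an a priori bound; it exploits the intersection structure of the family $\{u_\beta\}$ more directly (roughly, one bootstraps $u>u_\alpha$ to $u>u_{\alpha'}$ for $\alpha'>\alpha$ and iterates). In Part~(ii), you correctly identify that excluding a steady-state limit above $\lambda u_\alpha$ does not by itself yield finite-time blow-up, only unboundedness; a separate Kaplan-type step is indeed required. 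Finally, you do not address the stability assertion in~(ii) at all, which in \cite{GNW92} is proved via weighted energy estimates exploiting the ordering $u_\beta<u_*$ and the marginal Hardy constant at $p=p_{JL}$.
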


Inspired by this result we say that a solution $u^*$ of \eqref{Fuj}
with initial data $u_0^*$
is a {\bf threshold solution} if the following is true:
If $u_0\geq\lambda u_0^*$ for some $\lambda>1$
then the solution of \eqref{Fuj} blows up in finite time;
if $u_0\leq\lambda u_0^*$ for some $\lambda<1$
then the solution of \eqref{Fuj} exists globally.
We say that a solution $u^*$ of \eqref{Fuj}
with initial data $u_0^*$
is a {\bf strong threshold solution} if the following holds:
If $u_0\geq u_0^*$, $u_0\not\equiv u_0^*$ then 
the solution of \eqref{Fuj} blows up in finite time;
if $u_0\leq u_0^*$, $u_0\not\equiv u_0^*$ then
the solution of \eqref{Fuj} exists globally.
If $u^*$ is a threshold solution but not a strong threshold
solution then we say that it is a {\bf weak threshold solution}.
Hence, in particular, the steady states $u_\alpha$ are strong or weak
threshold solutions if $p\in[p_s,p_{JL})$ or $p\geq p_{JL}$,
respectively.

Threshold solutions have been intensively studied.
It is well known that their behavior strongly depends
on the exponent $p$ and the spatial decay of the initial data;
we recall some of the corresponding results below. 
On the other hand, the question, whether the threshold
solution is weak or strong, has attracted much less attention.
Even the fact that for any $p>p_F$ there exists
both weak and strong threshold solutions
(which will be an easy consequence of our results)
does not seem to be known.

The existence of weak threshold solutions for any $p>p_F$
follows from the following theorem.

\begin{theorem} \label{thm-weak}
Given $p>p_F$ there exists $L^*=L^*(n,p)>0$ with the following
properties:
Let $u_0\in BC^+$ and let $u$ denote the solution of \eqref{Fuj}.

\noindent\strut\hbox{\rm(i)}
If $u_0(x)|x|^{2/(p-1)}\leq L^*$ for all $x$ then $u$ is global.\\
\strut\kern5mm
In addition, if $p<p_{JL}$ then
$\|u(\cdot,t)\|_\infty\leq Ct^{-1/(p-1)}$ as $t\to\infty$.

\noindent\strut\hbox{\rm(ii)}
If $\liminf_{|x|\to\infty}u_0(x)|x|^{2/(p-1)}>L^*$
then $u$ blows up in finite time.

\noindent\strut\hbox{\rm(iii)}
$L^*(n,p)>L(n,p)$ if $p<p_{JL}$, $L^*(n,p)=L(n,p)$ if $p\geq p_{JL}$.
\end{theorem}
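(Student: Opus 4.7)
\emph{Setup and bounds on $L^*$.} The plan is to define
\[
L^* := \sup\bigl\{L\ge 0 : u_0\in BC^+,\ u_0(x)\le L|x|^{-2/(p-1)}\ \text{on } \R^n \Longrightarrow u \text{ global}\bigr\},
\]
and then read off (i)--(iii) from this definition, the scaling invariance $u(x,t)\mapsto \lambda^{2/(p-1)}u(\lambda x,\lambda^2 t)$ of \eqref{Fuj}, and Proposition~\ref{prop-GNW}. Positivity $L^*>0$ is the small-data global-existence result for $p>p_F$: in the weighted space $\{\phi\in BC^+:\sup_x(1+|x|)^{2/(p-1)}\phi(x)<\infty\}$ a Duhamel fixed point converges for small weighted norm, producing global solutions that already decay like $t^{-1/(p-1)}$. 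Finiteness $L^*<\infty$ follows because for $L$ large, a $BC^+$ truncation of $L|x|^{-2/(p-1)}$ dominates a blow-up subsolution---the rescaled singular steady state $cu_*$ with $c>1$ when $p>p_{sg}$, or else a Kaplan-type eigenfunction supported on a large ball in all regimes.

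\emph{Part (i).} Let $u_0\in BC^+$ with $u_0(x)|x|^{2/(p-1)}\le L^*$. For each $\theta\in(0,1)$, the datum $\theta u_0$ is bounded by $\theta L^*|x|^{-2/(p-1)}$ with $\theta L^*<L^*$, so the corresponding solution $u^{(\theta)}$ is global by definition of $L^*$. Passing $\theta\uparrow 1$ by monotonicity in the initial data and lower semicontinuity of the maximal existence time shows $u$ itself is global. The refined decay $\|u(\cdot,t)\|_\infty\le Ct^{-1/(p-1)}$ when $p<p_{JL}$ is deeper; I would prove it via parabolic rescaling at large $t$ (Giga--Kohn backward self-similar variables) together with the classification of bounded global self-similar profiles, which is available precisely below $p_{JL}$.

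\emph{Part (ii).} Write $u^\lambda(x,t):=\lambda^{2/(p-1)}u(\lambda x,\lambda^2 t)$; this is a solution, global iff $u$ is, and $u_0^\lambda(x)|x|^{2/(p-1)}=u_0(\lambda x)|\lambda x|^{2/(p-1)}$. The hypothesis supplies $\tilde L>L^*$ and $R_0$ with $u_0(y)\ge\tilde L|y|^{-2/(p-1)}$ for $|y|\ge R_0$, hence $u_0^\lambda(x)\ge\tilde L|x|^{-2/(p-1)}$ on $\{|x|\ge 1\}$ once $\lambda\ge R_0$. Assuming $u$ global, every $u^\lambda$ is global, hence so is the solution starting from any $w_0\in BC^+$ with $w_0\le u_0^\lambda$. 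Since $\tilde L>L^*$, the definition of $L^*$ produces some $\bar u_0\in BC^+$, $\bar u_0\le \tilde L|x|^{-2/(p-1)}$, whose solution blows up. Truncating $\bar u_0$ to $\bar u_0\cdot\chi_{\{|x|\ge 1\}}$ (adjusting the cut-off radius via scaling to preserve blow-up through a short-time regularization) produces a $w_0$ that lies below $u_0^\lambda$ for $\lambda$ large, contradicting global existence of $u^\lambda$.

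\emph{Part (iii) and main obstacle.} For $p\ge p_S$, the regular steady state $u_\alpha$ is global with $u_\alpha(x)|x|^{2/(p-1)}\to L$, so comparison and $\alpha\to\infty$ give $L^*\ge L$. When $p\ge p_{JL}$, Proposition~\ref{prop-GNW}(ii) gives the converse $L^*\le L$: a datum with asymptotic coefficient exceeding $L$ eventually dominates $\lambda u_\alpha$ for some $\lambda>1$ and large $\alpha$, forcing blow-up. For the strict inequality $L^*>L$ in the range $p_{sg}\le p<p_{JL}$, I would perturb $u_*$ (or a large-$\alpha$ regular steady state in the range $p_S\le p<p_{JL}$) outward, exploiting its linear instability in an appropriate weighted $L^\infty$-topology, to obtain global solutions whose initial data have asymptotic coefficient slightly greater than $L$. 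This perturbation is the main obstacle: it depends on a delicate spectral analysis of the linearization at $u_*$ in the weighted space, and the picture changes precisely at $p_{JL}$. A secondary technical obstacle is the $t^{-1/(p-1)}$ decay in part (i) for $p<p_{JL}$, which rests on the same asymptotic self-similarity machinery.
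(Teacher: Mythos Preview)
Your scheme has the right large-scale architecture (define a critical level, use scaling), but Part~(ii) contains a genuine gap that your parenthetical does not close. From $\tilde L>L^*$ you extract some $\bar u_0\in BC^+$ with $\bar u_0\le\tilde L|x|^{-2/(p-1)}$ whose solution blows up, and then you want to truncate $\bar u_0$ near the origin so that the truncation sits below $u_0^\lambda$. The problem is that nothing prevents $\bar u_0$ from being, say, a large bump concentrated in the unit ball (which is certainly $\le\tilde L|x|^{-2/(p-1)}$ there since the right-hand side is large), in which case $\bar u_0\chi_{\{|x|\ge1\}}$ is tiny and its solution is global. ``Adjusting the cut-off radius via scaling'' does not help: rescaling $\bar u_0$ preserves the pointwise bound $\le\tilde L|x|^{-2/(p-1)}$ but also preserves the feature that the blow-up mechanism may live near the origin; a ``short-time regularization'' makes $\bar u(\cdot,\delta)$ positive everywhere but gives no control of the form $\bar u(\cdot,\delta)\le\tilde L|x|^{-2/(p-1)}$ at infinity, so you can no longer fit it under $u_0^\lambda$. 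In short, your definition of $L^*$ does not by itself produce a blow-up datum whose blow-up is driven by its tail.

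The paper avoids this by working not with the full class $\{u_0\le L|x|^{-2/(p-1)}\}$ but with the one-parameter family $\Phi_\ell(r;m)=\min(m,\ell r^{-2/(p-1)})$. The threshold $\ell^*$ is independent of $m$ by scaling, so for any $\ell>\ell^*$ the solution $v^{[m]}$ with data $\Phi_\ell(\cdot;m)$ blows up even for arbitrarily small $m$. One then compares the solution $\hat u$ (with data vanishing near $0$ and equal to $\tfrac12(A+\ell^*)|x|^{-2/(p-1)}$ for large $|x|$) against $v^{[m]}$ via an intersection-number argument: initially $z(\hat U(\cdot,0)-V^{[m]}(\cdot,0))=1$ with $\hat U(0,0)=0<m=V^{[m]}(0,0)$, and for $m$ small enough continuous dependence gives $\hat U(0,t_0)>V^{[m]}(0,t_0)$; the zero-number drop then forces $\hat U\ge V^{[m]}$ for $t\ge t_0$, hence $\hat u$ blows up. This intersection comparison is the missing idea in your Part~(ii).

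Two smaller points. For the $t^{-1/(p-1)}$ decay in (i) when $p<p_{JL}$, the paper does not use backward (Giga--Kohn) variables; it builds the minimal forward self-similar solution $\tilde u(x,t)=t^{-1/(p-1)}w(|x|/\sqrt t)$ with bounded profile $w$ (boundedness is where $p<p_{JL}$ enters, via the uniqueness of the singular profile) and gets the decay by direct comparison $u\le\tilde u$. For $L^*>L$ in the range $p_S\le p<p_{JL}$, the paper invokes the Souplet--Weissler construction of regular forward self-similar solutions lying above $u_*$, which gives $\ell^*>L$ immediately; your proposed spectral perturbation of $u_*$ is in the right spirit but is exactly the hard analysis those references carry out, so you should cite it rather than redo it.
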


In fact, taking $u_0$ such that 
$$ u_0(x)|x|^{2/(p-1)}\leq L^* \ \hbox{ for all }\ x
 \quad\hbox{and}\quad
 \lim_{|x|\to\infty}u_0(x)|x|^{2/(p-1)}=L^*,$$ 
Theorem~\ref{thm-weak}
guarantees that $u$ is a weak threshold solution.
In addition, this solution decays to zero as $t\to\infty$ if $p<p_{JL}$.
On the other hand, it is known that this solution may decay to zero
(i.e.~$\lim\|u(\cdot,t)\|_\infty\to0$ as $t\to\infty$),
converge to a positive steady state, grow up
(i.e.~$\lim\|u(\cdot,t)\|_\infty\to \infty$ as $t\to\infty$),
or exhibit a more complicated behavior
if $p\geq p_{JL}$, see \cite{GNW01,GNW92,PY03}.

Theorem~\ref{thm-weak} follows from
\cite[Theorem 1.3 and Remark 1.3]{Nai12} if $p<p_{JL}$ or 
\cite[Theorem 20.5]{SPP} and \cite[Theorem 4(iii)]{GNW01}
if $p\geq p_{JL}$.   
In Section~\ref{sec5} we prove Theorem~\ref{thm-weak} for $p<p_{JL}$
by using different arguments than those in \cite{Nai12}.

The existence and behavior of strong threshold solutions 
are much less understood. If $p>p_S$ then \cite[Lemma 2]{PY14} shows
that any radially symmetric threshold solution with $u_0\in BC^+\cap H^1(\R^n)$
is a strong threshold solution, 
and it is also known that such solutions blow up in finite time,
see \cite{Mi02,Mi05,MM11,Sou16}.
On the other hand, the only known global strong threshold
solutions seem to be the steady states $u_\alpha$ for $p\in[p_S,p_{JL})$.
In order to study the existence and properties of strong threshold
solutions we will restrict ourselves to the initial data 
$u_0\in X$, where
\begin{equation} \label{X}
\left\{\quad\begin{aligned}
&\hbox{$X$ is the set of nonnegative, continuous, radially symmetric}\\
&\qquad\hbox{and radially nonincreasing functions in $\R^n$.}
\end{aligned}\right.
\end{equation}
The main technical tool in our study will be the following
characterization of strong threshold solutions.

\begin{theorem} \label{thm-strong}
Let $\|\cdot\|$ denote the norm in $L^\infty(\R^n)\cap L^1(\R^n)$.
If $u_0\in X$  then the corresponding solution $u$ of \eqref{Fuj}
is a strong threshold solution
if and only if for each $\eps>0$ there exist  
$v_1,v_2\in X$ such that
$\|u_0-v_1\|+\|u_0-v_2\|<\eps$, and
the solution of \eqref{Fuj} with initial data $v_1$ or $v_2$
exists globally or blows up in finite time, respectively.
\end{theorem}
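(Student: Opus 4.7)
The plan is to prove the two directions separately.

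For $(\Rightarrow)$, assume $u_0 \in X$ is a strong threshold solution, and note that necessarily $u_0 \not\equiv 0$. I would construct two $X$-approximants of $u_0$: an upward approximant $v_2 := u_0 + \delta\phi$ with $\phi \in X$ a fixed nonzero compactly supported bump, which lies in $X$ (sum of two radially nonincreasing nonnegative functions), satisfies $v_2 \ge u_0$, $v_2 \not\equiv u_0$, and $\|v_2 - u_0\| \to 0$ as $\delta \to 0$; and a downward approximant $v_1(r) := u_0(r + \delta\rho(r))$, where $\rho$ is a nonnegative smooth cutoff with $1 + \delta\rho' > 0$, supported on a radial range where $u_0$ is not constant. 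This preserves radial monotonicity and gives $v_1 \in X$, $v_1 \leq u_0$, $v_1 \not\equiv u_0$, and $\|v_1 - u_0\| \to 0$ by uniform continuity of $u_0$ on $[0, \infty)$. The strong threshold property applied to $v_1, v_2$ then forces global existence of the $v_1$-solution and blow-up of the $v_2$-solution.

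For $(\Leftarrow)$, I first replace $v_1^{(k)}$ by $\min(v_1^{(k)}, u_0) \in X$ and $v_2^{(k)}$ by $\max(v_2^{(k)}, u_0) \in X$: both modifications preserve $L^\infty \cap L^1$ closeness to $u_0$, and by the comparison principle the modified $v_1^{(k)}$-solution is still global while the modified $v_2^{(k)}$-solution still blows up. So without loss of generality $v_1^{(k)} \leq u_0 \leq v_2^{(k)}$ for all $k$. I then verify the two halves of the strong threshold property. For the upward half, let $W_0 \in BC^+$ with $W_0 \geq u_0$, $W_0 \not\equiv u_0$: if the $u_0$-solution blows up, so does the $W_0$-solution by comparison; otherwise, assume for contradiction that both are global. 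The difference $W - u \geq 0$, $\not\equiv 0$, satisfies a linear parabolic equation with nonnegative zeroth-order coefficient $p \int_0^1 (\theta W + (1-\theta) u)^{p-1}\, d\theta$, so by the strong maximum principle $W(\cdot, t) - u(\cdot, t) \geq G(t)(W_0 - u_0) > 0$ pointwise for $t > 0$, where $G(t)$ denotes the heat semigroup. The Duhamel-type upper bound gives $v_2^{(k)}(\cdot, t) - u(\cdot, t) \leq e^{Ct}G(t)(v_2^{(k)} - u_0)$. If we can arrange $v_2^{(k)}(\cdot, t_0) \leq W(\cdot, t_0)$ pointwise on $\R^n$ for some $t_0 > 0$ and sufficiently large $k$, then the comparison principle applied from $t_0$ forces the $v_2^{(k)}$-solution to remain bounded by the global $W_0$-solution for $t \geq t_0$, hence global, contradicting its blow-up.

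The downward half is symmetric: for $V_0 \in BC^+$ with $V_0 \leq u_0$, $V_0 \not\equiv u_0$, if the $u_0$-solution is global then the comparison principle gives the $V_0$-solution global; otherwise assume both blow up for contradiction, and derive the same type of pointwise comparison $V(\cdot, t_0) \leq v_1^{(k)}(\cdot, t_0)$ at some $t_0$ smaller than the $u_0$-solution's blow-up time and for large $k$, using the maximum-principle lower bound $u - V \geq G(t)(u_0 - V_0) > 0$ and the Duhamel upper bound $u - v_1^{(k)} \leq e^{Ct}G(t)(u_0 - v_1^{(k)})$. Once established, the comparison principle from $t_0$ forces the $V_0$-solution to stay bounded by the global $v_1^{(k)}$-solution, hence global, contradicting blow-up.

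The main obstacle in both nontrivial halves is the uniform pointwise comparison on $\R^n$ at time $t_0$. Near the origin this is straightforward, since the maximum-principle gap is bounded below on any compact set by a positive constant, while the perturbation is uniformly small in $L^\infty$. At infinity, however, $G(t_0)$ applied to the nontrivial difference $W_0 - u_0$ (respectively $u_0 - V_0$) may decay like a Gaussian, whereas the upper bound on the perturbation has a spatial decay controlled only through the $L^\infty \cap L^1$ smallness of $v_2^{(k)} - u_0$ (respectively $u_0 - v_1^{(k)}$). Matching these decay rates exploits both the $L^1$ content of the norm (yielding the heat-kernel $L^1 \to L^\infty$ smoothing estimate of order $t^{-n/2}$ together with the Gaussian tail bound) and the radially nonincreasing structure of the approximants (translating $L^1$ decay into pointwise tail estimates); this is precisely where the combined $L^\infty \cap L^1$ norm is essential.
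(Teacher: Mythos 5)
Your forward direction is fine (indeed slightly more than needed: once strong thresholdness is assumed, \emph{any} $v_1\leq u_0\leq v_2$ in $X$ with $v_i\not\equiv u_0$ and $\|u_0-v_1\|+\|u_0-v_2\|<\eps$ works, by definition). Your reduction at the start of the reverse direction --- replacing $v_i$ by $\min(v_i,u_0)$, $\max(v_i,u_0)$ --- also matches the paper. But the core of your reverse direction has a genuine gap, precisely at the point you flag and then wave past.

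The uniform pointwise comparison $v_2^{(k)}(\cdot,t_0)\leq W(\cdot,t_0)$ on all of $\R^n$ does not follow from the stated bounds, and in fact can fail for every $k$. The strong-maximum-principle lower bound $W(\cdot,t_0)-u(\cdot,t_0)\geq G(t_0)(W_0-u_0)$ decays like a Gaussian in $|x|$ once $W_0-u_0$ has, say, compact support. On the other hand $v_2^{(k)}-u_0\geq0$ is constrained only by $\|v_2^{(k)}-u_0\|_{L^1\cap L^\infty}<\eps_k$ and the requirement that both $v_2^{(k)}$ and $u_0$ lie in $X$; nothing prevents this perturbation from sitting in a thin annulus at a large radius $R_k$, with height and $L^1$ mass both $<\eps_k$, while $v_2^{(k)}=u_0$ elsewhere. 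At $|x|\approx R_k$ the Duhamel bound $e^{Ct_0}G(t_0)(v_2^{(k)}-u_0)$ is of order $\eps_k$ (no Gaussian gain, since the bump sits at the evaluation point), whereas $G(t_0)(W_0-u_0)$ there is of order $e^{-cR_k^2/t_0}$, which is astronomically smaller for $R_k$ large. So for any fixed $t_0$ and any $k$, the desired ordering at infinity can fail; neither the $L^1\to L^\infty$ smoothing nor the radial monotonicity of $v_2^{(k)}$ and $u_0$ (their difference is not monotone) rescues this. The downward half has the same obstruction.

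This is exactly the difficulty the paper is designed to circumvent, and the route it takes is substantively different. Rather than a pointwise comparison between the $W$-solution and the $v_2^{(k)}$-solution, the paper introduces $L^1$-threshold solutions (Definition~\ref{deftf}) and proves Theorem~\ref{thm2}: $L^1$-threshold $\Rightarrow$ strong threshold. The engine is Proposition~\ref{prop2}, a \emph{mass} comparison combined with the zero-number theorem: if two radial initial perturbations $\phi,\psi$ of $u_0$ intersect exactly once with $\phi(0)>\psi(0)$ and $\int\phi\,r^{n-1}\,dr>\int\psi\,r^{n-1}\,dr$, then the $\phi$-solution reaches any level $M$ at $r=0$ no later than the $\psi$-solution. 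The proof integrates the difference equation over $\R^n$ and observes that the resulting functional $f(t)=\int_{\R^n}(u_\phi-u_\psi)$ satisfies $f'\geq c_tf$ (nonnegative zeroth-order coefficient plus the single-intersection structure), so it cannot change sign --- contradicting the ordering that the zero-number drop would force. This argument is completely insensitive to where in space the perturbation sits, which is exactly what the $L^1$ part of the norm is really for, and it uses the zero-number machinery in place of a global pointwise ordering. Lemmas~\ref{lem-t0}--\ref{lemma4} then transport a small $L^1\cap L^\infty$ perturbation to a small bump near the origin (via strong maximum principle plus one more zero-number argument), where Proposition~\ref{prop2} applies directly. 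If you want to complete a proof along your lines, the fix is not better tail estimates but to replace the pointwise comparison at a single time $t_0$ by an intersection-count argument as in the paper, or to reorganize around an $L^1$-mass functional.
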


Theorem~\ref{thm-strong} guarantees, in particular,
that any threshold solution with initial data 
$u_0\in X\cap L^1(\R^n)$ 
is a strong threshold solution.
However, we will mainly use this theorem 
to study strong threshold solutions
with slow spatial decay, 
satisfying $\lim_{|x|\to\infty}u_0(x)|x|^{2/(p-1)}=L^*$,
for example.
We will show that
(in addition to the above mentioned strong threshold solutions
which blow up in finite time if $p>p_S$,
and the strong threshold steady states for $p_S\leq p<p_{JL}$), 
for any $p>p_F$ there exist global strong threshold solutions 
(GSTS for short) with initial data in $X$. In particular, 

\begin{itemize}
\item
if $p_F<p<p_{JL}$ then there exists a GSTS $u$ which decays to zero,

\item if $p\geq p_{JL}$ then there exists a GSTS $u$
which grows up,

\item if $p=p_S$ then there exists a non-stationary
 GSTS $u$ which converges to a positive steady state,

\item if $p=p_S$ and $n=3$ then there exists a GSTS $u$
which grows up,

\item if $p>p_S$ then there exists a GSTS $u$ such that
\begin{equation} \label{osc0} 
0=\liminf_{t\to\infty}\|u(\cdot,t)\|_\infty<
  \limsup_{t\to\infty}\|u(\cdot,t)\|_\infty=\infty.
\end{equation}
\end{itemize}

Global solutions with large-time behaviors just mentioned
have mostly been known (see \cite{PY14} in the case of \eqref{osc0} and
$p_S<p<p_{JL}$, for example), but it is not clear whether
those solutions are strong threshold solutions.
On the other hand, we often use those solutions or the methods of 
proofs of their existence in order to prove 
the existence of a GSTS with the same large-time behavior.  

It is known that if $p\geq p_{JL}$ then the singular steady state $u_*$
is a strong threshold singular solution in a suitable sense (see \cite{GV97}).
The following theorem shows that
an analogous result is true in the case $p<p_{JL}$.

\begin{theorem} \label{thm-strong2}
Assume $p_F<p<p_{JL}$ and let $L^*=L^*(n,p)$ be the constant defined
in Theorem~\ref{thm-weak}.
Let $\tilde u$ be the minimal weak solution of \eqref{Fuj}
with singular initial data $L^*|x|^{-2/(p-1)}$.
Then $\tilde u$ is a global self-similar solution
of the form $\tilde u(x,t)=t^{-1/(p-1)}w(|x|/\sqrt t)$ with $w$ bounded,
and the solution with initial data $\tilde u(\cdot,t_0)$ 
is a strong threshold solution for any $t_0>0$. 
\end{theorem}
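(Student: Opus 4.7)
First I would establish the self-similar form and boundedness of $w$. Both \eqref{Fuj} and the datum $L^*|x|^{-2/(p-1)}$ are invariant under the scaling $u(x,t)\mapsto\lambda^{2/(p-1)} u(\lambda x,\lambda^2 t)$, and the minimal weak solution is obtained as the monotone limit of the classical solutions $u_k$ with truncated data $u_{0,k}(x):=\min\{L^*|x|^{-2/(p-1)},k\}\in BC^+$. This construction commutes with the scaling, so $\tilde u$ is itself scale invariant; setting $\lambda=1/\sqrt t$ gives $\tilde u(x,t)=t^{-1/(p-1)}w(|x|/\sqrt t)$. Since $u_{0,k}(x)|x|^{2/(p-1)}\leq L^*$, Theorem~\ref{thm-weak}(i) yields global existence of $u_k$ together with $\|u_k(\cdot,t)\|_\infty\leq C_k t^{-1/(p-1)}$ as $t\to\infty$. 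Combining the bound for $k=1$ with the scaling identity $u_k(x,t)=k\,u_1(k^{(p-1)/2}x,k^{p-1}t)$ upgrades it to a bound $\|u_k(\cdot,t)\|_\infty\leq Ct^{-1/(p-1)}$ uniform in $k$ at any fixed $t>0$ (for $k$ large), and the monotone limit yields the same estimate for $\tilde u$. Boundedness of $w$ then follows on taking $t=1$.

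For the strong-threshold property I would fix $t_0>0$, set $u_0^*:=\tilde u(\cdot,t_0)$, and note $u_0^*\in X$ since the class $X$ is inherited from $u_{0,k}\in X$ through the parabolic flow (which preserves radial symmetry and radial nonincrease) and through the monotone limit. By Theorem~\ref{thm-strong} it suffices, for each $\eps>0$, to produce $v_1,v_2\in X$ with $\|v_i-u_0^*\|<\eps$ such that the solution from $v_1$ is global and that from $v_2$ blows up. I would fix once and for all a nonnegative, radial, nonincreasing $\phi\in C_c^\infty(\R^n)$ with $\|\phi\|=1$ and take $v_1:=u_0^*-(\eps/2)\phi$, $v_2:=u_0^*+(\eps/2)\phi$. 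Parabolic smoothing and the strong maximum principle render $u_0^*$ smooth with $-(u_0^*)'(r)>0$ for $r>0$ and a quadratic rate at $r=0$ matching the quadratic vanishing of $\phi'$ there, so $v_1\in X$ for $\eps$ small, while $v_2\in X$ automatically as the sum of two radial nonincreasing functions. Comparison with $\tilde u(\cdot,t_0+\cdot)$ gives a global solution from $v_1$.

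The hard part is to prove that the solution $u$ with initial data $v_2$ blows up in finite time. I would pass to the similarity variables $\xi=x/\sqrt{t_0+t}$, $s=\log((t_0+t)/t_0)$, $\bar u(\xi,s):=((t_0+t)/t_0)^{1/(p-1)} u(\sqrt{t_0+t}\,\xi,t)$, under which the equation becomes the autonomous $\bar u_s=\Delta\bar u+\tfrac{\xi}{2}\cdot\nabla\bar u+\tfrac{\bar u}{p-1}+\bar u^p$ with $\bar u(\cdot,0)=w+(\eps/2)\phi$, and the excess $v:=\bar u-w\geq 0$ satisfies (by convexity of $\sigma\mapsto\sigma^p$) the linear supersolution inequality $v_s\geq \mathcal Lv+pw^{p-1}v$ where $\mathcal L:=\Delta+\tfrac{\xi}{2}\cdot\nabla+\tfrac{1}{p-1}$. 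The crucial input is that in the range $p_F<p<p_{JL}$ the profile $w$ is \emph{linearly unstable}: in the appropriate weighted setting the operator $\mathcal L+pw^{p-1}$ has a strictly positive principal eigenvalue $\lambda_1>0$ with a positive principal eigenfunction $\varphi_1>0$. Granting this, pairing the inequality with $\varphi_1$ against the invariant weight yields $\langle v(\cdot,s),\varphi_1\rangle\geq e^{\lambda_1 s}\langle v(\cdot,0),\varphi_1\rangle>0$ (since $\phi\geq 0$, $\phi\not\equiv 0$, $\varphi_1>0$); a subsequent Kaplan-type eigenfunction argument applied to the nonlinear equation for $\bar u$ (testing against $\varphi_1$ and using Jensen) converts this unbounded exponential growth into finite $s$-time blow-up, equivalently finite $t$-time blow-up of $u$. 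The linear-instability statement is the only step in which the bound $p<p_{JL}$ is used essentially, reflecting that $p_{JL}$ is precisely the stability threshold for critical self-similar/singular profiles of \eqref{Fuj}.
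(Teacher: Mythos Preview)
Your derivation of the self-similar form and of the bound on $w$ via the scaling identity $u_k(x,t)=k\,u_1(k^{(p-1)/2}x,k^{p-1}t)$ is correct and efficient (modulo the mild circularity that in the paper Theorem~\ref{thm-weak}(i) for $p<p_{JL}$ is proved in the same section; the paper therefore gets boundedness of $w$ differently, via universal bounds for $p<p_S$ and a zero-number argument together with a uniqueness result for singular profiles for $p_S\le p<p_{JL}$).

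The blow-up argument for $v_2$, however, has a genuine gap: the ``crucial input'' $\lambda_1>0$ is \emph{false} for the extremal profile $w$. In the natural weighted space $L^2(e^{|\xi|^2/4}d\xi)$, in which $\mathcal L+pw^{p-1}$ is self-adjoint with discrete spectrum, there is a strictly positive radial element $\phi$ in the kernel: $(\mathcal L+pw^{p-1})\phi=0$, $\phi'(0)=0$, with $\phi(r)\sim c\,r^{2/(p-1)-n}e^{-r^2/4}$ at infinity, hence $\phi\in L^2_\rho$. By Krein--Rutman, the top eigenvalue is then exactly $0$; the extremal self-similar profile is marginally, not linearly, unstable. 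Your exponential-growth estimate for $\langle v,\varphi_1\rangle$ therefore disappears, and the Kaplan step was already fragile on its own: testing the equation for $\bar u$ against $\varphi_1\rho$ produces $\langle\bar u^p-pw^{p-1}\bar u,\varphi_1\rangle_\rho$, which has no sign.

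The paper repairs exactly this by choosing the perturbation to be $\varphi=\eps\phi$ with this specific kernel element (not an arbitrary bump). Then $\hat w(\cdot,0)=w+\eps\phi$ satisfies $\hat w_s(\cdot,0)=(w+\eps\phi)^p-w^p-pw^{p-1}\eps\phi>0$ by strict convexity, so $\hat w$ is time-increasing. If $\hat w$ were global it would have to climb to a steady state of the similarity equation strictly above $w$; but none exists (bounded ones are ruled out by maximality of $\ell(w)=L^*$, singular ones by a uniqueness result), so $\hat w(r_0,\cdot)\to\infty$ for some $r_0$, and a Kaplan argument on the ball $\{r<r_0\}$ then yields blow-up. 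Since $\phi\in L^\infty\cap L^1(\R^n)$, this provides the required $v_2\in X$ with $\|v_2-u_0^*\|<\eps$ and one concludes via Theorem~\ref{thm-strong}.
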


We summarize known (and our) results on the behavior of weak/strong 
threshold solutions with initial data in $X$ in Table~\ref{tab-ws}:
If a cell contains just ``YES'', for example,
then this means that there exist both weak and strong
threshold solutions with the designated property;
in the row ``Convergence to a positive steady state''
we only consider {\it non-stationary\/} solutions.
The results in Table~\ref{tab-ws} for $p<p_S$ follow from
\cite{Kaw96,PQS2} and  Theorems~\ref{thm-weak}, \ref{thm-strong},
global existence of all threshold solutions if $p=p_S$ 
or all weak threshold solutions if $p>p_S$ (i.e.~``NO'' for blow-up)
follows from \cite{GV97} or \cite[the proof of Lemma~2]{PY14},
respectively,
and the remaining results have already been mentioned above.
Some results related to the question marks 
for strong threshold solutions can be found in
Proposition~\ref{prop-GB} and Remark~\ref{rem-pS}(ii). 
If $p=p_S$ then \cite[Conjecture 1.1]{FK12} suggests that
a growing-up strong threshold solution  should exist if $n\leq4$.

%-----------------------------------------------------------------
\dimen1=40mm \dimen2=23mm \dimen3=17mm \dimen4=25mm \dimen5=19mm
\newcount\colcount 
\def\skipcol{\kern \ifcase\colcount 0mm\or \dimen2\or \dimen3\or \dimen4\or
\dimen5\fi%
      \advance\colcount by1}
\def\hsz#1{\dimen255=\ifcase \colcount \dimen1\or \dimen2\or \dimen3\or
\dimen4\or \dimen5\fi%
        \ifnum #1>1 \advance\dimen255 by% 
              \ifcase \colcount \relax\or \dimen3\or \dimen4\or
\dimen5\fi\fi%
        \ifnum #1>2 \advance\dimen255 by%
               \ifcase\colcount \relax\or \dimen4\or \dimen5\fi\fi%
        \ifnum #1>3 \advance\dimen255 by \dimen5\fi}
\def\vrv#1{\ifcase #1\relax\or 
  \vrule height6mm depth4mm\or  
  \vrule height11mm depth9mm\or 
  \vrule height16mm depth14mm\or
  \vrule height21mm depth19mm\or 
  \vrule height26mm depth24mm\or 
  \vrule height31mm depth29mm\fi}
\def\Cell#1#2#3{\vbox to6mm{\hsz{#1}\hbox
to\dimen255{\hss{#3}\hss\vrv#2}\hrule\vss}%
      \advance\colcount by#1}
\def\newrow#1{\colcount=0\hbox{\vrv1#1}}
%----------------------------------------------------------

\begin{table}
\centerline{$$
\vtop{\offinterlineskip%
\hrule%  
\newrow{%
\Cell11{\relax}%
\Cell11{$p_F<p<p_S$}%
\Cell11{$p=p_S$}%
\Cell11{$p_S<p<p_{JL}$}%  
\Cell11{$p\geq p_{JL}$}}% 
\newrow{%
\Cell11{Decay to zero}%
\Cell31{YES}% 
\Cell11{YES/?}}%
\newrow{%
\Cell11{Steady states}%
\Cell15{NO}%
\Cell21{NO/YES}% 
\Cell11{YES/NO}}%
\newrow{%
\Cell11{\vbox to6mm{\vss\kern4mm\hbox{\ Convergence to a}\hbox{positive
steady state}\vss}}%
\skipcol
\Cell11{?/YES}%
\Cell12{?}%
\Cell11{YES/?}}%
\newrow{%
\Cell11{Grow-up}%
\skipcol
\Cell11{?/YES${}^{*}$}% 
\skipcol
\Cell11{YES}}%
\newrow{%
\Cell11{Blow-up}%
\skipcol
\Cell11{NO}% 
\Cell21{NO/YES}}%  
\newrow{%
\Cell11{Other}%
\skipcol
\Cell11{?}%  
\Cell11{?/YES}%
\Cell11{YES}}% 
}%
$$}
\kern4mm
   \caption[Table]{%
Possible behavior of weak/strong threshold \\
\strut\kern18.5mm solutions  of \eqref{Fuj}  with initial data in $X$.\\   
YES${}^{*}$\dots if $n=3$; see Remark~\ref{rem-pS}(i)\\
}
   \label{tab-ws}
\end{table}

%------------------------------------------------------------

It should be mentioned that the corresponding notions 
of threshold and strong threshold solutions 
coincide in the case of the Cauchy-Dirichlet problem
\begin{equation} \label{FujOmega}
\left\{\quad\begin{aligned}
u_t-\Delta u &= u^p, &\qquad& x\in\Omega,\ t>0, \\
u &= 0, &\qquad& x\in\partial\Omega,\ t>0, \\
u(x,0) &= u_0(x), &\qquad& x\in\Omega,
\end{aligned}\right.
\end{equation}
where $\Omega$ is a smooth bounded domain in $\R^n$
and $u_0\in C^1(\overline\Omega)$, $u_0\geq0$,
$u_0=0$ on $\partial\Omega$.
In addition, the behavior of threshold solutions
of \eqref{FujOmega} is well understood if
$\Omega$ is a ball and $u_0$ is radially symmetric
and radially nonincreasing (see Table~\ref{tab2}),
and the results remain true in a more general situation.
In fact, let $\Omega\subset\R^n$ be bounded, smooth, 
and let $u_0\in L^\infty(\Omega)$, $u_0\geq0$.
%
%---------------------------------------------------------
\dimen4=17mm
\begin{table}
\centerline{$$
\vtop{\offinterlineskip%
\hrule%
\newrow{%
\Cell11{\relax}
\Cell11{$1<p<p_S$}%
\Cell11{$p=p_S$}%
\Cell11{$p>p_S$}}%
\newrow{%
\Cell11{Steady states}%
\Cell12{YES}%
\Cell22{NO}}%
\newrow{%
\Cell11{\vbox to6mm{\vss\kern4mm\hbox{\ Convergence to a}\hbox{positive steady state}\vss}}%
}%
\newrow{%
\Cell11{Grow-up}%
\Cell13{NO}%
\Cell11{YES}%
\Cell11{NO}}%
\newrow{%
\Cell11{Blow-up}%
\skipcol
\Cell11{NO}%
\Cell11{YES}}%
\newrow{%
\Cell11{Other}%
\skipcol
\Cell21{NO}}%
}
$$}
\kern4mm
   \caption[Table]{%
Possible behavior of threshold solutions of \eqref{FujOmega}
with $\Omega=\{x\in\R^n:|x|<R\}$ and $u_0\in C(\overline\Omega)$ being
radially symmetric and radially nonincreasing, $u_0=0$ on $\partial\Omega$.}
   \label{tab2}
\end{table}
%---------------------------------------------------------
%
If $p<p_S$ then any threshold solution of \eqref{FujOmega}
is global, bounded, and its $\omega$-limit set consists
of nontrivial equilibria.
If $p>p_S$ and $\Omega$ is convex then the threshold
solutions blow up in finite time due to
\cite{CDZ07} (see also \cite[Proposition 6.6(ii)]{BS15}).
On the other hand, if $p\geq p_S$ and $\Omega$ is an annulus, for example,
then any radial threshold solution converges to 
the unique positive radial steady state:
This follows from the estimates in the proof of 
\cite[Theorem 4.1]{PQS2} and \cite[Theorem 1.2]{NN85}.

Let us also mention that the behavior of non-radial
threshold solutions of the Cauchy problem \eqref{Fuj} is open even
in the subcritical case: If $p_F<p<p_S$  then the 
global existence and decay of 
such solutions is only known under the additional assumption
$p<n(n+2)/(n-1)^2$ or $n\leq2$, see \cite{PQS2,Q16},
or for exponentially decaying initial data,
see \cite{Kaw96} or \cite[Theorem~28.9]{SPP};
global existence (without decay) is also known
for $u_0\in H^1(\R^n)\cap L^\infty(\R^n)\cap L^{(p+1)/p}(\R^n)$,
see \cite[Theorem~1.2(v)]{Q03}. 
On the other hand, if $p>p_S$ and the (non-radial) initial data $u_0\in BC^+$
are continuously differentiable and satisfy either
$$ |\nabla u_0|^2+u_0^{p+1}\in L^q(\R^n)\hbox{ for some }
q\in\Bigl[1,\frac n2\frac{p-1}{p+1}\Bigr),$$
or
$$ u_0(x)+|x||\nabla u_0(x)|=o(|x|^{-2/(p-1)})\hbox{ as }|x|\to\infty,$$
then the threshold solution blows up in finite time, see \cite{Sou16}.

%=======================================================
\section{Preliminaries} \label{sec-prel}

We will consider classical solutions of \eqref{Fuj}
of the form $u(x,t)=U(|x|,t)$.
In particular, $u_*(x,t)=U_*(|x|,t)$ and $u_\alpha(x,t)=U_\alpha(x,t)$,
where $u_*$ and $u_\alpha$ are the singular and regular steady states 
defined in the Section~\ref{sec-intro}.

By $z$ we denote the zero number functional on the interval $[0,\infty)$.
More precisely, given $\varphi\in C([0,\infty))$, 
we set $z(\varphi)=0$ if $\varphi$ does not change sign, and
$$\left\{\quad\begin{aligned}
 z(\varphi):=\sup\{k:\exists \ \ & 0\leq r_1< r_2<\dots<r_{k+1}\ \hbox{ such that } \\
     &\varphi(r_i)\varphi(r_{i+1})<0 \ \hbox{ for }\ i=1,2,\dots,k\}
\end{aligned}\right.
$$
otherwise. Similarly, $z_{[0,R]}$ denotes the zero number functional on the
interval $[0,R]$.
Recall also that $X$ is defined in \eqref{X}, set
\begin{equation} \label{X1}
 X_1:=\{\Phi\in C([0,\infty)):\hbox{$\Phi$ is nonincreasing, $\Phi\geq0$}\}
\end{equation}
and notice that $u_0\in X$ 
if and only if $u_0(x)=\Phi(|x|)$ for some $\Phi\in X_1$.

The proof of the following lemma is obvious.

\begin{lemma} \label{lem-thr}
Assume $u_0(x)=\Phi(|x|)$ for some $\Phi\in X_1$.
Then the corresponding solution of \eqref{Fuj}
is a threshold solution if and only if for each $\eps>0$ there
exist $\Phi_\eps^+,\Phi_\eps^-\in X_1$ such that
\begin{equation} \label{thresh1}
\left\{\ \begin{aligned}
& 0\leq\Phi_\eps^-\leq\Phi\leq\Phi_\eps^+,\\
& \hbox{the solution with initial data $\Phi_\eps^-(|x|)$
   exists globally}, \\
& \hbox{the solution with initial data $\Phi_\eps^+(|x|)$
  blows up in finite time},
\end{aligned}\right.
\end{equation} 
and 
$$\Phi_\eps^+-\Phi_\eps^-\leq\eps\Phi.$$
\end{lemma}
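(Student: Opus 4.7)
The plan is to prove both directions of this equivalence by a direct appeal to the definition of threshold solution together with the parabolic comparison principle for nonnegative solutions of \eqref{Fuj}. No auxiliary machinery is needed, which is why the author marks the proof as obvious.

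For the necessity ($\Rightarrow$): assuming $u^*$ with initial data $u_0(x)=\Phi(|x|)$ is a threshold solution, I would, for each fixed $\eps\in(0,2)$, simply set $\Phi_\eps^+:=(1+\eps/2)\Phi$ and $\Phi_\eps^-:=(1-\eps/2)\Phi$. Both lie in $X_1$ because nonnegative scalar multiples preserve continuity, nonnegativity and monotonicity. The two-sided sandwich $0\le\Phi_\eps^-\le\Phi\le\Phi_\eps^+$ and the gap identity $\Phi_\eps^+-\Phi_\eps^-=\eps\Phi$ are immediate. Applying the definition of threshold solution to $u^*$ with $\lambda=1+\eps/2>1$ and $\lambda=1-\eps/2<1$ respectively then delivers blow-up of the solution starting from $\Phi_\eps^+(|x|)$ and global existence of the solution starting from $\Phi_\eps^-(|x|)$.

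For the sufficiency ($\Leftarrow$): given a general $v_0\in BC^+$ with $v_0\ge\lambda u_0$ for some $\lambda>1$, I would choose $\eps>0$ so small that $1+\eps\le\lambda$ and invoke the hypothesis at this $\eps$. Combining the sandwich $\Phi\le\Phi_\eps^+$ with the gap bound yields $\Phi_\eps^+\le\Phi_\eps^-+\eps\Phi\le(1+\eps)\Phi\le\lambda\Phi\le v_0$ pointwise, so the comparison principle, together with the known blow-up of the solution from $\Phi_\eps^+(|x|)$, forces blow-up of the solution from $v_0$. The opposite case $v_0\le\lambda u_0$ with $\lambda<1$ is symmetric: pick $\eps$ with $1-\eps\ge\lambda$ and use the gap bound in reverse to get $\Phi_\eps^-\ge\Phi_\eps^+-\eps\Phi\ge(1-\eps)\Phi\ge\lambda\Phi\ge v_0$, then compare with the globally existing solution from $\Phi_\eps^-(|x|)$.

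There is no meaningful obstacle: the only arithmetic worth highlighting is that the two-sided sandwich combined with the gap bound automatically yields both $\Phi_\eps^+\le(1+\eps)\Phi$ and $\Phi_\eps^-\ge(1-\eps)\Phi$, which is the bridge between the uniform scalar test in the definition of threshold solution and the pointwise pinching formulated with $\Phi_\eps^\pm$. Everything else is one line of the definition plus one application of the comparison principle.
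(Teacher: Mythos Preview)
Your proof is correct and is exactly the argument the author has in mind; the paper simply states that the proof is obvious and gives no details, and your scalar choices $\Phi_\eps^\pm=(1\pm\eps/2)\Phi$ together with the comparison principle are the natural way to unpack that remark.
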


\begin{definition} \label{deftf} \rm
Assume $u_0(x)=\Phi(|x|)$ for some $\Phi\in X_1$.
We say that the corresponding solution $u$ of \eqref{Fuj}
is an {\bf $L^1$-threshold solution} if for each $\eps>0$ there
exist $\Phi_\eps^+,\Phi_\eps^-\in X_1$ such that
\eqref{thresh1} is true and
$$ 
 \|\Phi_\eps^+-\Phi_\eps^-\|_\infty+
  \int_0^\infty|\Phi_\eps^+(r)-\Phi_\eps^-(r)|r^{n-1}\,dr \leq\eps, 
$$
\end{definition}

%---------------------------------------------------------
\begin{lemma} \label{lemma1}
Assume $p_S\leq p<p_{JL}$. 
Let $u$ be a threshold or an $L^1$-threshold
solution with initial data $u_0\in X$
and maximal existence time $T$,
and let $\Phi,\Phi_\eps^\pm$ be as in Lemma~\ref{lem-thr}
or Definition~\ref{deftf}, respectively.
Fix $\beta>0$ and assume $z(\Phi-U_\beta)=1$.
If $\Phi(0)<U_\beta(0)$ and
$z(\Phi_\eps^- -U_\beta)=1$ for each $\eps$ small 
then $u(0,t)<U_\beta(0)$ for all $t\in(0,T)$.  
If  $\Phi(0)>U_\beta(0)$,  and
$z(\Phi_\eps^+ -U_\beta)=1$ for each $\eps$ small 
then $u(0,t)>U_\beta(0)$ for all $t\in(0,T)$. 
\end{lemma}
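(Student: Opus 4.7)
The strategy is intersection comparison with the regular steady state $U_\beta$ via the zero-number functional, combined with the dichotomy of Proposition~\ref{prop-GNW}(i), which applies here because $p_S\le p<p_{JL}$. For any radial classical solution $V$ on $[0,T_V)$, the difference $V(\cdot,t)-U_\beta$ satisfies a linear parabolic equation on $[0,\infty)$ with the natural Neumann condition at $r=0$. The Angenent--Chen--Pol\'a\v{c}ik zero-number theorem then gives that $z_{[0,\infty)}(V(\cdot,t)-U_\beta)$ is nonincreasing in $t$ and drops strictly whenever $V(\cdot,t_0)-U_\beta$ has a multiple zero; a zero at $r=0$ is automatically multiple because of the radial Neumann condition, which is the key mechanism I will exploit.

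In Case~1, the first step is a sub-claim: for all sufficiently small $\eps>0$, the global solution $u_\eps^-$ satisfies $u_\eps^-(0,t)<U_\beta(0)$ for every $t\ge 0$. Suppose not, and let $t_1>0$ be the earliest time with $u_\eps^-(0,t_1)=U_\beta(0)$. I would first rule out that the zero number of $u_\eps^-(\cdot,t)-U_\beta$ has already dropped to $0$ before $t_1$: if it had, say at $t_0<t_1$, then (since $u_\eps^-(0,t_0)<U_\beta(0)$) $u_\eps^-(\cdot,t_0)\le U_\beta$ pointwise, and the comparison principle together with the strong maximum principle for the linear parabolic equation satisfied by $U_\beta-u_\eps^-$ would give $u_\eps^-(0,t_1)<U_\beta(0)$, a contradiction. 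Hence the zero number equals $1$ on $[0,t_1)$ and drops to $0$ at $t_1$. A continuity argument (the unique sign-change point of $u_\eps^-(\cdot,t)-U_\beta$ on $[0,\infty)$ must collapse to $r=0$ as $t\uparrow t_1$) yields $u_\eps^-(\cdot,t_1)\ge U_\beta$ pointwise, and the strong maximum principle gives $u_\eps^-(\cdot,t)>U_\beta$ strictly for $t$ just past $t_1$. But then Proposition~\ref{prop-GNW}(i) forces $u_\eps^-$ to blow up in finite time, contradicting its global existence.

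With the sub-claim in hand, I would prove the main conclusion by contradiction: if $u(0,t^*)=U_\beta(0)$ at the smallest such $t^*\in(0,T)$, then applying verbatim the same zero-number/strong-max-principle argument directly to $u-U_\beta$ gives $u(\cdot,t)>U_\beta$ strictly for $t$ slightly greater than $t^*$; in particular, choosing $\delta>0$ small enough that $t^*+\delta<T$ yields $u(0,t^*+\delta)>U_\beta(0)$. Since $u$ stays bounded on $\R^n\times[0,t^*+\delta]$, continuous dependence on initial data (using $\Phi_\eps^-\to\Phi$ in sup norm, which is built into Lemma~\ref{lem-thr} and Definition~\ref{deftf}) yields $u_\eps^-(0,t^*+\delta)\to u(0,t^*+\delta)>U_\beta(0)$, so $u_\eps^-(0,t^*+\delta)>U_\beta(0)$ for all small $\eps$, contradicting the sub-claim. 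Case~2 is entirely parallel with $u_\eps^+$ replacing $u_\eps^-$ and all inequalities reversed; the sub-claim for $u_\eps^+$ uses finite-time blow-up as the source of contradiction, since the analogous argument would otherwise yield $u_\eps^+<U_\beta$ past the crossing time, hence global decay by Proposition~\ref{prop-GNW}(i), in conflict with $u_\eps^+$ blowing up.

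The main obstacle I foresee is the careful bookkeeping of the zero number near $r=0$---specifically, verifying the strict drop at a boundary zero, and checking that the sign pattern at $t_1$ (respectively $t^*$) really is the one forced by continuity of the unique sign-change point from the left. Everything else amounts to a standard combination of parabolic comparison, the strong maximum principle, Proposition~\ref{prop-GNW}(i), and continuous dependence on initial data on any finite time interval where the limit solution remains bounded.
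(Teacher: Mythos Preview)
Your proposal is correct and follows essentially the same approach as the paper: zero-number monotonicity for the difference with $U_\beta$, the strict drop at a boundary zero at $r=0$ (degenerate because of the Neumann condition), Proposition~\ref{prop-GNW}(i), and continuous dependence on initial data. The paper's argument is organized slightly more directly---it assumes $U(0,t_0)\ge U_\beta(0)$, deduces $U(\cdot,t)>U_\beta$ for $t>t_0$, then transfers this via continuous dependence to $V_\eps^-$ and applies the zero-number drop to $V_\eps^-$ in one stroke, rather than isolating your preliminary sub-claim---but the substance is identical.
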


\begin{proof}
We have $u(x,t)=U(|x|,t)$ with $U(r,0)=\Phi(r)$.

First assume $\Phi(0)<U_\beta(0)$ and
$z(\Phi_\eps^- -U_\beta)=1$ for each $\eps$ small.
Assume on the contrary that $U(0,t_0)\geq U_\beta(0)$ for some $t_0\in(0,T)$.
Then the zero number properties  (see \cite[Proposition 2.6]{PY14s})
and the maximum principle guarantee
$z(U(\cdot,t)-U_\beta)=0$ and $U(\cdot,t)>U_\beta$ for $t>t_0$.
Fixing such $t$, the continuous dependence on initial values guarantees
the existence of $\eps>0$ small such that 
the solution $v_\eps^-(x,t)=V_\eps^-(|x|,t)$ of \eqref{Fuj}
with initial data $V_\eps^-(r,0)=\Phi_\eps^-$ satisfies
$V_\eps^-(0,t)>U_\beta(0)$, hence 
$z(V_\eps^-(\cdot,t)-U_\beta)=0$ and $V_\eps^-(\cdot,t)>U_\beta$.
Now  Proposition~\ref{prop-GNW} 
contradicts the global existence of $v_\eps^-$.

Next assume $\Phi(0)>U_\beta(0)$ and
$z(\Phi_\eps^+ -U_\beta)=1$ for each $\eps$ small.
Assume on the contrary that $U(0,t_0)\leq U_\beta(0)$ for some
$t_0\in(0,T)$. 
Let $v_\eps^+(x,t)=V_\eps^+(|x|,t)$ denote the solution of \eqref{Fuj}
with initial data $V_\eps^+(r,0)=\Phi_\eps^+$
Then similarly as above we obtain the existence of $\eps>0$
and $t>t_0$ such that $V_\eps^+(\cdot,t)<U_\beta$.
Since $v_\eps^+$ blows up in finite time,
Proposition~\ref{prop-GNW} yields a contradiction.
\end{proof}

%---------------------------------------------------------
\begin{lemma} \label{lemma2}
Let $v(x,t)=V(|x|,t)$ be a global bounded positive solution of \eqref{Fuj}
satisfying $z(V_t(\cdot,t_0))<\infty$ for some $t_0>0$.
Then $V(0,t)\to\beta\in[0,\infty)$ as $t\to\infty$.
If $\beta>0$ then $V(\cdot,t)\to U_\beta$ in $C_{\loc}$ as $t\to\infty$. 
\end{lemma}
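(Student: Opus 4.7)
My plan combines a zero-number argument at $r=0$ for convergence of $V(0,t)$ with a parabolic-compactness argument that identifies subsequential limits with the steady state $U_\beta$. For the first claim, the function $w := V_t$ solves the linear radial parabolic equation $w_t = w_{rr} + \frac{n-1}{r} w_r + pV^{p-1} w$ on $(0,\infty) \times (t_0,\infty)$, with $w_r(0,t) = 0$ by radial symmetry. By the zero-number theorem (\cite[Proposition 2.6]{PY14s}), $t \mapsto z(V_t(\cdot,t))$ is non-increasing; being integer-valued and bounded by $z(V_t(\cdot,t_0)) < \infty$, it stabilizes at some $k$ for $t \geq T_0$. If $V_t(0,\cdot)$ were to change sign on $(T_0,\infty)$, continuity would give $V_t(0,t_*) = 0$ for some $t_* > T_0$; combined with $\partial_r V_t(0,t_*) = 0$ this produces a multiple zero of $V_t(\cdot,t_*)$ at $r = 0$, forcing a strict drop of $z$ at $t_*$ and contradicting its constancy after $T_0$. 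Thus $V_t(0,\cdot)$ has constant sign on $(T_0,\infty)$, so $V(0,\cdot)$ is monotone; being bounded it converges to some $\beta \in [0,\infty)$.

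For the second claim, assume $\beta > 0$. Fix a sequence $t_n \to \infty$ and set $v_n(x,t) := v(x,t_n+t)$. Interior parabolic regularity applied to the bounded solution $v$ gives uniform $C^{2,1}_\loc$ bounds on $\{v_n\}$, so a subsequence converges in $C^{2,1}_\loc(\R^n \times \R)$ to a bounded, non-negative, radial entire solution $\tilde v(x,t) = \tilde V(|x|,t)$ of \eqref{Fuj}. The first step yields $\tilde V(0,t) = \lim_n V(0,t_n+t) = \beta$ for all $t \in \R$; in particular $\tilde V_t(0,\cdot) \equiv 0$.

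The crux is to upgrade this to $\tilde V_t \equiv 0$, i.e., to show $\tilde V$ is a steady state. Bounded entire solutions of \eqref{Fuj} are spatially real-analytic, so $\tilde V(\cdot,t)$ admits a convergent expansion $\tilde V(r,t) = \sum_{k \geq 0} b_k(t)\, r^{2k}$ about $r=0$. Inserting this into the radial PDE and matching coefficients of $r^{2k}$ yields
\begin{equation*}
b_k'(t) = 2(k+1)(2k+n)\, b_{k+1}(t) + Q_k\bigl(b_0(t),\ldots,b_k(t)\bigr),
\end{equation*}
where $Q_k$ is the $r^{2k}$-coefficient of $\tilde V^p$, a polynomial in $b_0,\ldots,b_k$. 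Solving for $b_{k+1}$ in terms of $b_k'$ and the lower $b_j$, and starting from the constant $b_0(t) \equiv \beta$, an induction on $k$ shows every $b_k$ is constant in $t$; by analyticity, $\tilde V$ is time-independent. Hence $\tilde V$ is a positive (by the strong maximum principle) bounded radial steady state with $\tilde V(0) = \beta > 0$, and uniqueness of such steady states forces $\tilde V \equiv U_\beta$. Since the limit is independent of $\{t_n\}$, the full convergence $V(\cdot,t) \to U_\beta$ in $C_\loc(\R^n)$ follows.

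The main obstacle is the passage from $\tilde V_t(0,\cdot) \equiv 0$ to $\tilde V_t \equiv 0$: a direct appeal to Angenent's theorem on $\tilde V_t$ is awkward because the double zero at $r = 0$ is permanent in $t$ rather than isolated, so the strict-drop statement does not apply cleanly. The analyticity-based recursion bypasses this difficulty by reducing the problem to a triangular ODE system in the Taylor coefficients at the origin.
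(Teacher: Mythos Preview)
Your first step (eventual monotonicity of $V(0,\cdot)$ via the zero-number drop for $V_t$) matches the paper's argument. For the second step the paper takes a different route: given an $\omega$-limit entire solution $V_\infty$ with $V_\infty(0,\cdot)\equiv\beta$, it assumes $V_\infty(r_0,t_0+\tau_0)\neq V_\infty(r_0,t_0)$ for some $r_0,\tau_0>0$, sets $W(\cdot,t):=V_\infty(\cdot,t+\tau_0)-V_\infty(\cdot,t)$, and observes that $W$ solves a linear radial parabolic equation with $W(0,t)=W_r(0,t)=0$ for all $t$ while $W(r_0,t)\neq0$ for $t$ near $t_0$; hence $z_{[0,r_0]}(W(\cdot,t))$ is finite yet must drop at \emph{every} such $t$ because the degenerate zero at $r=0$ persists, a contradiction (cf.\ \cite[Corollary~2.9]{MM04}). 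So the paper actually exploits exactly the permanent double zero that you flagged as awkward for a direct zero-number argument on $\tilde V_t$. Your analyticity/Taylor-recursion argument is a legitimate and elegant alternative that bypasses zero-number theory for the limit altogether; the price is that you must invoke spatial real-analyticity of bounded parabolic solutions together with a uniform-in-$t$ radius of convergence (and then unique continuation or the identity theorem) to pass from ``all $b_k$ constant'' to ``$\tilde V_t\equiv0$''. One small correction: $Q_k$ is a polynomial in $b_0,\dots,b_k$ only when $p\in\mathbb{N}$; for general $p>1$ it is an analytic function of $(b_0,\dots,b_k)$ in a neighborhood of $b_0=\beta>0$, which is all your induction needs.
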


\begin{proof}
Since the function $t\mapsto z(V_t(\cdot,t))$ is finite 
and nonincreasing for $t>t_0$, and it drops whenever $V_t(0,t)=0$
(see \cite[Proposition 2.6]{PY14s}),
there exists $t_1>t_0$ such that $V_t(0,t)$ does not change sign for
$t>t_1$.
Therefore, $V(0,t)\to\beta\in[0,\infty)$ as $t\to\infty$.
Next assume $\beta>0$.
Then any $v_\infty$ in the $\omega$-limit set (in $C_{\loc}$)
of $v$ is a bounded entire
radial nonnegative solution of \eqref{Fuj}
satisfying $v_\infty(0,t)=\beta$ for all $t\in\R$,
$v_\infty(x,t)=V_\infty(|x|,t)$.
If 
$$
\hbox{$V_\infty(r_0,t_0+\tau_0)\ne V_\infty(r_0,t_0)$ 
for some $r_0>0$, $t_0\in\R$ and $\tau_0>0$}
$$
then $V_\infty(r_0,t+\tau_0)\ne V_\infty(r_0,t)$ for $t\in[t_0,t_0+\eps)$,
hence
$z_{[0,r_0]}(V_\infty(\cdot,t+\tau_0)-V_\infty(\cdot,t))$
is finite for $t\in(t_0,t_0+\eps)$ and drops at any $t\in(t_0,t_0+\eps)$,
which yields a contradiction, cf.~\cite[Corollary 2.9]{MM04}.
Consequently, $V_\infty(\cdot,t)=U_\beta$ for each $t$.
\end{proof}

%=======================================================
\section{Proof of Theorem~\ref{thm-strong}} \label{sec-strong}

In this section we will 
prove Theorem~\ref{thm-strong}
by showing that each $L^1$-threshold solution
is a strong threshold solution.

%------------------------------------------------------
\begin{proposition} \label{prop2}
Let $\phi,\psi\in C([0,\infty))$ be nonnegative,
$z(\phi-\psi)=1$, $\phi(0)>\psi(0)$,
$\int_0^\infty \psi(r)r^{n-1}\,dr < \int_0^\infty \phi(r)r^{n-1}\,dr < \infty$.
Let $u_0\in X$, $u_0>0$.

(i) 
Assume that $u_0(x)+\phi(|x|)$ and $u_0(x)+\psi(|x|)$ are radially nonincreasing.
Let $u_\phi$ or $u_\psi$ be the solution of \eqref{Fuj} with initial data
$u_0(x)+\phi(|x|)$ or $u_0(x)+\psi(|x|)$, respectively.
Fix $M>u_\phi(0,0)$ and assume that $u_\psi(0,t)<M$ for $t<t_0$ and
$u_\psi(0,t_0)=M$ for some $t_0>0$.
Then $u_\phi(0,t)=M$ for some $t\leq t_0$.
 
(ii)
Assume that $u_0(x)-\phi(|x|)$ and $u_0(x)-\psi(|x|)$ are nonnegative and
radially nonincreasing,
Let $u_\phi$ or $u_\psi$ be the solution of \eqref{Fuj} with initial data
$u_0(x)-\phi(|x|)$ or $u_0(x)-\psi(|x|)$, respectively.
If $u_\psi(0,t)\leq M$ for some $M>0$ and $t\leq t_0$,
then $u_\phi(0,t)\leq M$ for $t\leq t_0$.
\end{proposition}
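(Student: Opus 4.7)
The plan is to study $w := u_\phi - u_\psi$, which on $\R^n \times [0, t_0]$ satisfies the linear parabolic equation
\[
w_t - \Delta w = c(x,t)\,w, \qquad c(x,t) := p \int_0^1 (\lambda u_\phi + (1-\lambda) u_\psi)^{p-1}\, d\lambda,
\]
with $c \geq 0$ bounded. Since $u_\phi, u_\psi$ are radially nonincreasing (the initial data is, and this property is preserved by the Cauchy problem), so is $c(\cdot,t)$. The initial datum $w(\cdot,0) = \pm(\phi - \psi)$ has zero number one, $w(0,0)$ of a definite sign, and the weighted integral $\int_0^\infty w(r,0)\,r^{n-1}\,dr$ also of a definite sign by hypothesis.

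For part (i), I argue by contradiction: if $u_\phi(0,t) < M$ throughout $[0,t_0]$, then $w(0,0) > 0$ and $w(0,t_0) < 0$ produce a first time $\tau \in (0,t_0)$ with $w(0,\tau) = 0$. Smoothness of $w$ as a function on $\R^n$ makes $r=0$ a multiple zero at $\tau$, so the zero-number principle (\cite[Proposition 2.6]{PY14s}) gives $z(w(\cdot,\tau)) \leq 0$, meaning $w(\cdot,\tau)$ has a constant sign. To exclude the case $w(\cdot,\tau) \leq 0$, I employ a conserved weighted integral of adjoint type. Let $\eta$ solve $-\eta_t - \Delta \eta = c\,\eta$ backward in time on $[0,\tau]$ with $\eta(\cdot,\tau) \equiv 1$. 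After reversing time this becomes a forward linear parabolic equation with nonnegative, radially nonincreasing coefficient, so by standard comparison $\eta(\cdot,t) > 0$ and radially nonincreasing for every $t \in [0,\tau]$. A Green-type integration by parts (the boundary terms at infinity vanish thanks to the decay of $w,\eta$ inherited from bounded initial data) shows that
\[
J(t) := \int_0^\infty w(r,t)\,\eta(r,t)\,r^{n-1}\, dr
\]
is constant on $[0,\tau]$.

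Now the integral hypothesis enters: since $w(\cdot,0) = \phi-\psi$ is positive on $[0,r_*)$ and negative on $(r_*,\infty)$ (where $r_*$ is the unique sign change of $\phi-\psi$), and since $\eta(r,0) \geq \eta(r_*,0)$ for $r \leq r_*$ while $\eta(r,0) \leq \eta(r_*,0)$ for $r \geq r_*$, we obtain
\[
J(0) \;\geq\; \eta(r_*,0)\int_0^\infty (\phi-\psi)(r)\,r^{n-1}\,dr \;>\; 0.
\]
Conservation then gives $\int_0^\infty w(r,\tau)\,r^{n-1}\,dr = J(\tau) = J(0) > 0$, contradicting $w(\cdot,\tau) \leq 0$. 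Hence $w(\cdot,\tau) \geq 0$ and is not identically zero, so the strong maximum principle yields $w(\cdot,t) > 0$ for $t \in (\tau, t_0]$; in particular $u_\phi(0,t_0) > u_\psi(0,t_0) = M$, and the intermediate value theorem together with $u_\phi(0,0) < M$ produces the desired $t^* \in (0,t_0)$ with $u_\phi(0,t^*) = M$.

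Part (ii) is entirely symmetric: assuming $u_\phi(0,t^*) > M$ for some $t^* \leq t_0$ and working with $w(\cdot,0) = \psi - \phi$, the same construction produces a first sign-crossing time $\tau \in (0,t^*)$ of $w(0,\cdot)$ at which $w(\cdot,\tau)$ has constant sign; the adjoint identity now yields $J(0) < 0$, forcing $w(\cdot,\tau) \leq 0$; the strong maximum principle then propagates $w(\cdot,t) < 0$ on $(\tau,t^*]$ and gives $u_\phi(0,t^*) < u_\psi(0,t^*) \leq M$, a contradiction. The main technical points to check are the preservation of radial monotonicity of $\eta$ under the backward flow (a consequence of reflection/moving-plane arguments applied to the linear equation with radially monotone coefficient) and the vanishing of the boundary term at infinity in the conservation identity; both are standard consequences of comparison principles applied to the backward linear equation.
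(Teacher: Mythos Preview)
Your approach is correct and reaches the same contradiction, but the paper takes a more direct route. Instead of constructing a backward adjoint weight $\eta$, the paper works with the unweighted mass $f(t):=\int_{\R^n}(u_\phi-u_\psi)\,dx$. On the interval where $z(U_\phi(\cdot,t)-U_\psi(\cdot,t))=1$, let $r_t$ be the unique zero and set $c_t:=pU_\phi(r_t,t)^{p-1}$. Radial monotonicity of $u_\phi,u_\psi$ themselves (not of an auxiliary function) yields the pointwise bound $u_\phi^p-u_\psi^p\geq c_t(u_\phi-u_\psi)$: where $U_\phi>U_\psi$ the intermediate value $U_\theta$ exceeds $U_\psi(r_t)=U_\phi(r_t)$, and where $U_\phi<U_\psi$ the reverse holds. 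Integrating gives $f'(t)\geq c_t f(t)\geq0$, so $f$ stays positive, contradicting $f(t_1)\leq0$ at the time $t_1$ where the zero number drops and $U_\phi\leq U_\psi$.

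Your conserved-quantity argument encodes the same monotonicity trick one step removed: you pass the radial monotonicity from $c(\cdot,t)$ to the adjoint $\eta(\cdot,t)$, and then use monotonicity of $\eta(\cdot,0)$ to bound $J(0)\geq\eta(r_*,0)\int_0^\infty(\phi-\psi)r^{n-1}\,dr>0$. This is sound, but it obliges you to prove radial monotonicity of $\eta$ (which you only sketch) and to justify the vanishing boundary terms in the conservation identity; note here that $\eta$ does \emph{not} decay---it starts at $1$ and grows under the forward flow---so the justification rests entirely on the $L^1$-decay of $w$ and boundedness of $\eta,\nabla\eta$, contrary to your phrasing. The paper's approach sidesteps both technicalities by exploiting directly the monotonicity of $u_\phi,u_\psi$, which is already given. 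Your adjoint method would, however, generalize more readily to situations where only the coefficient, and not the solutions, is known to be monotone.
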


\begin{proof}
We will only prove (i); the proof of (ii) is analogous.
As above, we will write $u_\phi,u_\psi$ in the form
$u_\phi(x,t)=U_\phi(|x|,t)$ and 
$u_\psi(x,t)=U_\psi(|x|,t)$.

Assume on the contrary $u_\phi(0,t)<M$ for $t\leq t_0$. 
We have 
$$z(U_\phi(\cdot,0)-U_\psi(\cdot,0))=z(\phi-\psi)=1$$
and $U_\phi(0,0)>U_\psi(0,0)$, $U_\phi(0,t_0)<U_\psi(0,t_0)$.
The zero number properties guarantee the existence of
$t_1\in(0,t_0)$ such that
$z(U_\phi(\cdot,t)-U_\psi(\cdot,t))=1$ for $t<t_1$,
$z(U_\phi(\cdot,t)-U_\psi(\cdot,t))=0$ for $t>t_1$,
hence
$U_\phi\leq U_\psi$ for $t\geq t_1$.
Consequently, denoting
$$f(t):=\int_{\R^n}(u_\phi(x,t)-u_\psi(x,t))\,dx,$$
we have $f(0)>0$ and $f(t_1)\leq0$. 

Given $t\in(0,t_1)$, let $r_t$ be the unique zero of
$U_\phi(\cdot,t)-U_\psi(\cdot,t)$ , i.e.
$U_\phi(r_t,t)=U_\psi(r_t,t)$.
Set $c_t:=pU_\phi(r_t,t)^{p-1}$.
Since $u_\phi,u_\psi$ are radially nonincreasing, we have
$$(U_\phi^p-U_\psi^p)(r,t)=pU_\theta^{p-1}(r,t)(U_\phi-U_\psi)(r,t) 
\geq c_t(U_\phi-U_\psi)(r,t), $$ 
where we used 
$U_\theta(r,t)\geq U_\psi(r,t)\geq U_\psi(r_t,t)=U_\phi(r_t,t)$ if $r\leq r_t$
and, similarly, $U_\theta(r,t)\leq U_\phi(r_t,t)$ if $r\geq r_t$.
We also have
\begin{equation} \label{eq2}
\left\{\quad\begin{aligned}
\frac{\partial}{\partial t}(u_\phi-u_\psi)
&=\Delta(u_\phi-u_\psi)+(u_\phi^p-u_\psi^p) \\
&=\Delta(u_\phi-u_\psi)+a(x,t)(u_\phi-u_\psi) \\
&\geq \Delta(u_\phi-u_\psi)+c_t(u_\phi-u_\psi), \\
\end{aligned}\right.
\end{equation}
where $a(x,t):=pu_\theta^{p-1}(x,t)\in[0,pM^{p-1}]$ 
and $(U_\phi-U_\psi)(\cdot,0)=\phi-\psi$.
Integrating \eqref{eq2} over $\R^n$  
we obtain
$$f'(t)=\int_{\R^n} a(x,t)(u_\phi-u_\psi)(x,t)\,dx\geq c_t f(t),$$ 
hence $f(t)\geq f(0)>0$ for $t\leq t_1$,
which yields a contradiction.
\end{proof}

%------------------------------------------------------
\begin{remark} \label{rem-prop} \rm
Assumption 
$\phi(0)>\psi(0)$ in Proposition~\ref{prop2}
can be replaced by the following assumption:
There exist $r\geq0$ such that
$\phi(r)>\psi(r)$ and $\phi\geq\psi$ on $[0,r]$.
In fact, this more general assumption
guarantees that the more restrictive assumption
will be satisfied after a time shift.
\qed
\end{remark} 

%------------------------------------------------------
\begin{lemma} \label{lem-t0}
Let $u$ be an $L^1$-threshold solution with initial data $u_0(x)=\Phi(|x|)$
and maximal existence time $T$.
Fix $t_0\in(0,T)$ and let $v$ be the solution of \eqref{Fuj}
with initial data $u(\cdot,t_0)$.
Then $v$ is an $L^1$-threshold solution.
\end{lemma}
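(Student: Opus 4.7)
The plan is to take the approximating pair $\Phi_\delta^\pm \in X_1$ from the $L^1$-threshold definition of $u$ (with $\delta$ to be chosen small in terms of $\eps$), flow it up to time $t_0$, and declare the resulting profiles to be the approximating pair for $v$. Write $v_\delta^\pm(x,t) = V_\delta^\pm(|x|,t)$ for the solutions of \eqref{Fuj} with initial data $\Phi_\delta^\pm$; by Definition~\ref{deftf}, $v_\delta^-$ exists globally while $v_\delta^+$ blows up in finite time. The class $X_1$ is invariant under the Fujita flow---the heat semigroup preserves nonnegative radially nonincreasing profiles, as does the map $s \mapsto s^p$, so Picard iteration stays in $X_1$---hence $V_\delta^\pm(\cdot, t) \in X_1$ whenever the solution exists. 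The ordering $V_\delta^-(\cdot, t_0) \leq u(\cdot, t_0) \leq V_\delta^+(\cdot, t_0)$ follows from parabolic comparison, and the solutions of \eqref{Fuj} starting from $V_\delta^\pm(\cdot, t_0)$ are the time-shifts $v_\delta^\pm(\cdot, t + t_0)$, which are respectively global and eventually blowing up.

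The bulk of the work is the $L^\infty \cap L^1$ smallness of $V_\delta^+(\cdot, t_0) - V_\delta^-(\cdot, t_0)$. Setting $M_0 := \sup_{t \in [0, t_0]} \|u(\cdot, t)\|_\infty < \infty$, I would first invoke a standard continuous-dependence Gronwall argument: the differences $v_\delta^\pm - u$ satisfy linear parabolic equations whose coefficients are bounded in terms of $M_0$ once $\delta$ is small, so for all sufficiently small $\delta$ both $v_\delta^\pm$ exist on $[0, t_0]$ and remain bounded by $M_0 + 1$ there. Then $W := v_\delta^+ - v_\delta^- \geq 0$ satisfies
$$ W_t - \Delta W = b(x, t)\, W, \qquad 0 \leq b \leq C := p(M_0 + 1)^{p-1}, $$
with initial data $\Phi_\delta^+ - \Phi_\delta^- \in L^\infty(\R^n) \cap L^1(\R^n)$. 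The maximum principle yields $\|W(\cdot, t_0)\|_\infty \leq e^{C t_0}\, \|W(\cdot, 0)\|_\infty$, and integrating the equation in $x$ (legitimate via the mild formulation, since $W(\cdot, 0) \in L^1$ and $b$ is bounded, which forces $W(\cdot, t) \in L^1$ throughout $[0, t_0]$) and then applying Gronwall gives $\|W(\cdot, t_0)\|_1 \leq e^{C t_0}\, \|W(\cdot, 0)\|_1$.

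Choosing $\delta$ so small that $(1 + |S^{n-1}|)\, e^{C t_0}\, \delta < \eps$ then produces
$$ \|V_\delta^+(\cdot, t_0) - V_\delta^-(\cdot, t_0)\|_\infty + \int_0^\infty \bigl(V_\delta^+(r, t_0) - V_\delta^-(r, t_0)\bigr)\, r^{n-1}\, dr < \eps, $$
verifying Definition~\ref{deftf} for $v$. The step I expect to be the main obstacle is ensuring that $v_\delta^+$ survives up to time $t_0$ rather than blowing up earlier; this is handled by the continuous-dependence argument above, and is the only point where quantitative care is required.
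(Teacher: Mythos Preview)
Your argument is correct and follows essentially the same route as the paper: flow the approximating pair $\Phi_\delta^\pm$ to time $t_0$, use continuous dependence in $L^\infty$ to guarantee existence on $[0,t_0]$ and smallness of the $L^\infty$-difference, and integrate the linear equation satisfied by the difference (with bounded coefficient) together with Gronwall to control the $L^1$-norm. The only cosmetic distinction is that the paper estimates $u_\eps^\pm - u$ separately rather than $v_\delta^+ - v_\delta^-$ directly, which is immaterial.
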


\begin{proof}
Let $\Phi_\eps^\pm$ be from Definition~\ref{deftf},
$\tilde u_0:=u(\cdot,t_0)$.
If $\eps$ is small enough then the continuous dependence
of solutions of \eqref{Fuj} on initial data in $L^\infty$
guarantees that the solutions $u_\eps^\pm$ with initial data
$\Phi_\eps^\pm$ exist for $t\leq t_0$, and there
exists $\tilde\eps=\tilde\eps(\eps,u_0,t_0)>0$ such that
$\|u_\eps^\pm(\cdot,t_0)-\tilde u_0\|_\infty\leq\tilde\eps/2$
and $\tilde\eps\to0$ as $\eps\to0$.
The function $w:=u_\eps^\pm-u$ does not change sign,
belongs to $L^1(\R^n)$ for fixed $t$ and solves the equation 
$w_t=\Delta w+aw$, where $a=a(x,t)$ is bounded for $t\leq t_0$
(see the proof of Proposition~\ref{prop2}).
Integrating this equation and enlarging $\tilde\eps$
if necessary we obtain
$\|u_\eps^\pm(\cdot,t_0)-\tilde u_0\|_{L^1(\R^n)}\leq\tilde\eps/2$.
Consequently, the functions
$\tilde\Phi(|x|):=\tilde u_0(x)$ 
and
$\tilde\Phi_{\tilde\eps}^\pm(|x|):=\tilde u_\eps^\pm(x,t_0)$ 
satisfy the conditions in Definition~\ref{deftf}
(with $u_0$ and $\eps$ replaced by $\tilde u_0$ and $\tilde\eps$).
 \end{proof}

%------------------------------------------------------
\begin{lemma} \label{lemma3}
Let $u$ be an $L^1$-threshold solution with initial data $u_0(x)=\Phi(|x|)$.
Then, for any $\delta>0$ small enough, 
there exist 
$\phi_1,\phi_2\in X_1$ such that
$\delta\geq\phi_i(0)>0=\phi_i(\delta)$, $i=1,2$, and the following is true:

(i) The solution
of \eqref{Fuj} with initial data
$u_0(x)+\phi_1(|x|)$  blows up in finite time.

(ii) If $\Phi(r)<\Phi(0)$ for $r>0$ then 
$\Phi-\phi_2\in X_1$
and the solution of \eqref{Fuj} with initial data 
$u_0(x)-\phi_2(|x|)$ exists globally.
\end{lemma}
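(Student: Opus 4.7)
The plan is to deduce Lemma~\ref{lemma3} from Proposition~\ref{prop2}, using the $\Phi_\eps^\pm$ furnished by Definition~\ref{deftf} for a sufficiently small $\eps$. The guiding principle is to choose $\phi_1$ and $\phi_2$ so that the difference $\phi_i-(\Phi_\eps^+-\Phi)$ (resp.\ $\phi_i-(\Phi-\Phi_\eps^-)$) is a nonincreasing function on $[0,\delta]$: this handles the delicate zero-number hypothesis $z(\phi-\psi)=1$ of Proposition~\ref{prop2} essentially for free.

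For part (ii), I would set
\[
 \phi_2(r):=\Phi(r)-\Phi(\delta)\ \text{ for }r\in[0,\delta],\qquad \phi_2\equiv0\ \text{on }[\delta,\infty).
\]
By continuity of $\Phi$ at $0$, $\phi_2(0)=\Phi(0)-\Phi(\delta)\in(0,\delta]$ once $\delta$ is small enough; then $\phi_2\in X_1$, and $\Phi-\phi_2$ equals the constant $\Phi(\delta)$ on $[0,\delta]$ and equals $\Phi$ beyond, so $\Phi-\phi_2\in X_1$. Given $\Phi_\eps^-$ from Definition~\ref{deftf} for small $\eps$ and setting $\psi:=\Phi-\Phi_\eps^-\geq0$, a direct computation gives
\[
 \phi_2-\psi=\Phi_\eps^- -\Phi(\delta)\ \text{on }[0,\delta],
\]
which is nonincreasing in $r$ (since $\Phi_\eps^-\in X_1$), positive at $r=0$ and $\leq0$ at $r=\delta$; on $[\delta,\infty)$ one has $\phi_2-\psi=-\psi\leq0$. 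Hence $z(\phi_2-\psi)=1$. The remaining hypotheses of Proposition~\ref{prop2}(ii)---namely $\phi_2(0)>\psi(0)$, $\int\psi\,r^{n-1}dr<\int\phi_2\,r^{n-1}dr<\infty$, and the radial monotonicity of $u_0-\phi_2=\Phi-\phi_2\in X_1$ and $u_0-\psi=\Phi_\eps^-\in X_1$---all hold for $\eps$ small. Applying Proposition~\ref{prop2}(ii), $u_{\phi_2}(0,t)$ is dominated by $u_\psi(0,t)$, which stays bounded on every $[0,t_0]$ because $\Phi_\eps^-$ gives a global solution; hence $u_{\phi_2}$ is global.

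For part (i), to accommodate the possibility that $\Phi_\eps^+$ is flat near $0$, I would add a small linear tent: with $\lambda:=\delta/2$, set
\[
 \phi_1(r):=\bigl[\Phi_\eps^+(r)-\Phi_\eps^+(\delta)\bigr]+\lambda(1-r/\delta)\ \text{for }r\in[0,\delta],\qquad \phi_1\equiv0\ \text{on }[\delta,\infty).
\]
Both summands lie in $X_1$ and vanish at $\delta$, so $\phi_1\in X_1$ with $\phi_1(\delta)=0$; and for $\delta$ small (by continuity of $\Phi$) and $\eps$ small, $\phi_1(0)\leq[\Phi(0)-\Phi(\delta)]+\eps+\lambda\leq\delta$. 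With $\psi:=\Phi_\eps^+-\Phi\geq0$, the same cancellation trick gives
\[
 \phi_1-\psi=\Phi(r)-\Phi_\eps^+(\delta)+\lambda(1-r/\delta)\ \text{on }[0,\delta],
\]
which is nonincreasing, strictly positive at $r=0$ provided $\eps<\lambda$ (since $\Phi(0)-\Phi_\eps^+(\delta)\geq-\eps$), and equals $-\psi(\delta)\leq0$ at $r=\delta$; on $[\delta,\infty)$ one has $\phi_1-\psi=-\psi\leq0$. Thus $z(\phi_1-\psi)=1$. The remaining hypotheses of Proposition~\ref{prop2}(i) hold because $\lambda>\eps$ gives $\phi_1(0)>\psi(0)$ and the tent alone contributes $\lambda\int_0^\delta(1-r/\delta)r^{n-1}dr$ of order $\delta^{n+1}$, which exceeds the bound $\eps$ on $\int\psi\,r^{n-1}dr$. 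Since the solution with initial data $\Phi+\psi=\Phi_\eps^+$ blows up at some finite $T^*$, Proposition~\ref{prop2}(i) then forces $u_{\phi_1}(0,\cdot)$ to reach every large level $M$ before $T^*$, so $u_{\phi_1}$ blows up in finite time.

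The main technical obstacle is the zero-number condition $z(\phi_i-\psi)=1$: for a naive tent $\phi_i$ the difference could oscillate, because $\psi=\pm(\Phi_\eps^\pm-\Phi)$ need not be monotone even though $\Phi$ and $\Phi_\eps^\pm$ are. The key trick is to build the leading piece of $\phi_i$ out of $\Phi_\eps^\pm$ (or $\Phi$) itself, so that this part of $\psi$ cancels exactly and the remaining expression is a nonincreasing function of $r$, automatically yielding $z=1$.
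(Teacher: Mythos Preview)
Your overall strategy is correct and close to the paper's: both parts are reduced to Proposition~\ref{prop2} with $\psi=\Phi_\eps^{\pm}-\Phi$ (up to sign), and the delicate hypothesis $z(\phi_i-\psi)=1$ is secured by arranging that $\phi_i-\psi$ be monotone on the support of $\phi_i$. However, there is a genuine gap in both parts: your claim that continuity of $\Phi$ at $0$ yields $\Phi(0)-\Phi(\delta)\le\delta$ for all small $\delta$ is false in general (take $\Phi(r)=\Phi(0)-r^{1/2}$ near $0$). This breaks the required bound $\phi_i(0)\le\delta$. The remedy is the paper's: first choose $r_0\in(0,\delta)$ with $\Phi(0)-\Phi(r_0)<\delta$ (this uses only continuity) and support $\phi_i$ in $[0,r_0]$ rather than $[0,\delta]$; then $\phi_i(\delta)=0$ is automatic and $\phi_i(0)\le\delta$ holds. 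With this correction your argument for (ii) is essentially identical to the paper's.

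For part (i) the paper takes a somewhat different route that avoids tying $\phi_1$ to $\Phi_\eps^+$: it fixes an \emph{arbitrary} $\phi\in X_1$ with $\delta\ge\phi(0)>0=\phi(\delta)$ and instead modifies $\psi$, replacing $\Phi_\eps^+$ by $\tilde\Phi_\eps:=\max(\Phi_\eps^+,\Phi+\varphi_\eps)$ where $\varphi_\eps\in X_1$ is a plateau of height $\eps$ on $[0,\delta]$ vanishing at $2\delta$. Then $\psi=\tilde\Phi_\eps-\Phi$ equals the constant $\eps$ on $[0,\delta]$, so $\phi-\psi=\phi-\eps$ is automatically nonincreasing there, and the size constraint $\phi(0)\le\delta$ is free because $\phi$ was chosen independently of $\Phi$ and $\eps$. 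Your cancellation trick (build $\phi_1$ from $\Phi_\eps^+$) and the paper's flattening trick (make $\psi$ constant near $0$) achieve the same monotonicity, but the paper's version decouples $\phi_1$ from $\eps$, which is exactly what makes the height bound effortless.

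A minor point in (ii): you should also record the degenerate case $\psi\le\phi_2$ (which occurs when $\Phi_\eps^-=\Phi$ on $[\delta,\infty)$ and $\Phi_\eps^-(\delta)=\Phi(\delta)$); there $z(\phi_2-\psi)=0$, Proposition~\ref{prop2} does not apply, but $u_0-\phi_2\le\Phi_\eps^-$ gives global existence directly by comparison. The paper handles this explicitly.
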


\begin{proof}
Let $\Phi_\eps^\pm$ be from Definition~\ref{deftf}.

(i) Fix $\delta>0$ small and $\phi\in X_1$ satisfying
$\delta\geq\phi(0)>0=\phi(\delta)$. 
Consider $\eps>0$ small
and choose $\varphi_\eps\in X_1$ such that 
$\eps=\varphi_\eps(0)=\varphi_\eps(\delta)>\varphi_\eps(2\delta)=0$.
Set $\tilde\Phi_\eps:=\max(\Phi_\eps^+,\Phi+\varphi_\eps)$
and notice that $\tilde\Phi_\eps=\Phi+\eps$ on $[0,\delta]$.
If $\eps$ is small enough then the functions $\phi$ and $\psi:=\tilde\Phi_\eps-\Phi$
satisfy the assumptions in Proposition~\ref{prop2}(i).
Since the solution $u_\psi$ with initial data
$u_0(x)+\psi(|x|)=\tilde\Phi_\eps(|x|)\geq\Phi_\eps^+(|x|)$
blows up in finite time, 
Proposition~\ref{prop2}(i) 
and the radial monotonicity of $u_\psi$ guarantee that the solution of \eqref{Fuj}
with initial data $u_0(x)+\phi(|x|)$ blows up in finite time.

(ii) 
Assume that $\Phi(r)<\Phi(0)$ for $r>0$.
Choose $\delta>0$ small and $r_0\in(0,\delta)$ such that 
$\Phi(0)-\Phi(r_0)<\delta$.
Set $\phi(r):=\max(0,\Phi(r)-\Phi(r_0))$. 
If $\eps>0$ is small enough then the functions $\phi$ and $\psi:=\Phi-\Phi_\eps^-$
satisfy either $\psi\leq\phi$ or
the assumptions in Proposition~\ref{prop2}(ii).
Since the solution with initial data $u_0(x)-\psi(|x|)=\Phi_\eps^-(|x|)$
exists globally, the comparison principle or
Proposition~\ref{prop2}(ii) and the radial monotonicity
guarantee that the solution of \eqref{Fuj}
with initial data $u_0(x)-\phi(|x|)$ exists globally.
\end{proof}

%------------------------------------------------------
\begin{lemma} \label{lemma4}  
Let $u$ be an $L^1$-threshold solution,
$u_0(x)=\Phi(|x|)$.
Let $\psi:[0,\infty)\to[0,\infty)$ be bounded and continuous,
$\psi\not\equiv0$. Then the solution of \eqref{Fuj}
with initial data $u_0(x)+\psi(|x|)$ blows up in finite time.
If $u_0(x)\geq\psi(|x|)$ then the solution of \eqref{Fuj}
with initial data $u_0(x)-\psi(|x|)$ exits globally.
\end{lemma}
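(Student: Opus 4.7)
The plan is to reduce Lemma~\ref{lemma4} to Lemma~\ref{lemma3} via the smoothing action of the heat semigroup. Identifying $\psi$ with its radial extension to $\R^n$, for every $t_0>0$ the function $e^{t_0\Delta}\psi$ is continuous and strictly positive on all of $\R^n$ (since $\psi\geq 0$ is continuous and not identically zero), and hence bounded below by some $\eta>0$ on any ball $\{|x|\leq R\}$. This quantitative lower bound is exactly what will let me absorb the tiny radial bumps $\phi_1,\phi_2\in X_1$ produced by Lemma~\ref{lemma3}, applied at time $t_0$ to the new initial datum $\tilde u_0:=u(\cdot,t_0)$. Note that $\tilde u_0\in X$ because radial monotonicity is preserved by the flow, and by Lemma~\ref{lem-t0} it is again the initial datum of an $L^1$-threshold solution.

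For part (i), let $v$ denote the solution with $v(x,0)=u_0(x)+\psi(|x|)$. If $u$ blows up in finite time, the comparison $v\geq u$ finishes the job; otherwise $u$ is global and I argue by contradiction, assuming $v$ is global as well. Then $w:=v-u\geq 0$ satisfies $w_t-\Delta w=v^p-u^p\geq 0$ with $w(x,0)=\psi(|x|)$, and the parabolic maximum principle yields $w(\cdot,t_0)\geq e^{t_0\Delta}\psi$; in particular, $w(x,t_0)\geq\eta$ on $\{|x|\leq R\}$. Applying Lemma~\ref{lemma3}(i) to $\tilde u_0$ with $\delta\leq\min(R,\eta)$ small enough produces $\phi_1\in X_1$, supported in $[0,\delta]$ with $\phi_1(0)\leq\delta$, such that the solution starting from $\tilde u_0(|\cdot|)+\phi_1(|\cdot|)$ blows up in finite time. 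Since $\phi_1(|x|)\leq\delta\leq\eta\leq w(x,t_0)$ for $|x|\leq R$ and $\phi_1$ vanishes outside $[0,\delta]$, one gets $v(x,t_0)\geq\tilde u_0(|x|)+\phi_1(|x|)$, and the comparison principle from $t_0$ onwards forces $v$ to blow up, contradicting globality. Part (ii) runs in parallel with signs reversed: if $u$ is global then $v\leq u$ is automatically global, so I may assume $u$ blows up at $T_u<\infty$, in which case $T_v\geq T_u$ by comparison. Picking $t_0\in(0,T_u)$, the same argument applied to $w:=u-v\geq 0$ yields $w(\cdot,t_0)\geq e^{t_0\Delta}\psi$; applying Lemma~\ref{lemma3}(ii) to $\tilde u_0$ with $\delta\leq\min(R,\eta)$ then produces $\phi_2$ such that $v(x,t_0)\leq\tilde u_0(|x|)-\phi_2(|x|)$ and the right-hand side evolves as a global solution, so $v$ itself is global by comparison.

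The main technical obstacle is verifying the strict monotonicity hypothesis $\tilde u_0(r)<\tilde u_0(0)$ for $r>0$ required by Lemma~\ref{lemma3}(ii). This should follow from the strong maximum principle applied to the linear parabolic equation satisfied by the radial derivative $U_r$, together with the boundary condition $U_r(0,t)=0$ enforced by radial smoothness: since $U_r(\cdot,0)\leq 0$ and $U_r\not\equiv 0$, one obtains $U_r(r,t_0)<0$ for all $r>0$ and $t_0>0$. The required non-constancy of $u_0$ is itself forced by the $L^1$-threshold hypothesis, which rules out $u_0\equiv 0$ (excluded by $u_0\geq\psi\not\equiv 0$ in part~(ii) anyway) and also positive constants $u_0\equiv c$, since every subsolution $L^1$-weighted-close to such an initial datum would still equal $c$ outside a bounded ball, and hence blow up like the ODE solution of $u'=u^p$.
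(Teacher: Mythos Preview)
Your proof is correct but takes a genuinely different route from the paper's, especially for the blow-up assertion. The paper applies Lemma~\ref{lemma3} at time $0$ to the \emph{original} datum $u_0$, producing the bump $\phi_1$ supported near the origin; it then fixes a second bump $\tilde\psi\leq\psi$ supported \emph{away} from the origin and compares the two perturbed solutions $u_{\phi_1}$ and $\tilde u$ via a zero-number argument: initially $z(U_{\phi_1}(\cdot,0)-\tilde U(\cdot,0))=1$ with $U_{\phi_1}(0,0)>\tilde U(0,0)$, and continuous dependence for $\delta$ small flips the ordering at the origin by time $t_0$, forcing $\tilde U\geq U_{\phi_1}$ thereafter. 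Your argument instead lets the heat semigroup do the work: the differential inequality $w_t-\Delta w\geq 0$ for $w=v-u$ gives $w(\cdot,t_0)\geq e^{t_0\Delta}\psi>0$ directly, after which you invoke Lemma~\ref{lem-t0} to shift the $L^1$-threshold structure to $\tilde u_0=u(\cdot,t_0)$ and apply Lemma~\ref{lemma3} there, so the comparison $v(\cdot,t_0)\geq\tilde u_0+\phi_1$ is pointwise and no intersection counting is needed. Your approach is more elementary in that it avoids the zero-number machinery entirely, at the cost of an extra appeal to Lemma~\ref{lem-t0}; the paper's approach stays within its main toolbox and does not time-shift for the blow-up part. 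For the global-existence assertion both proofs end up doing essentially the same thing (time-shift to get strict radial decrease of $\tilde u_0$, then Lemma~\ref{lemma3}(ii)); your justification of that strict decrease via the strong maximum principle for $U_r$, and your observation that a positive constant cannot be an $L^1$-threshold datum (since any $\Phi_\eps^-\in X_1$ with $\Phi_\eps^-\leq c$ and finite weighted $L^1$-distance must in fact equal $c$, hence blows up), are both sound.
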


\begin{proof}
Choose $r_1>0$ such that $\psi(r_1)>0$ 
and fix a smooth function $\tilde\psi:[0,\infty)\to[0,\infty)$
with support in $[r_1/2,2r_1]$ such that $\tilde\psi\leq\psi$
and $\tilde\psi\not\equiv0$.
Fix $t_0>0$ small and notice that the solution
$\tilde u(x,t)=\tilde U(|x|,t)$ with initial data $u_0(x)+\tilde\psi(|x|)$
satisfies $\tilde u(0,t_0)>u(0,t_0)$ by the strong comparison principle.
Consider $\delta\in(0,r_1/2)$ 
and let $\phi_1$ be the function from Lemma~\ref{lemma3}.
The continuous dependence on initial data guarantees that
choosing $\delta$ small enough,
the solution $u_{\phi_1}(x,t)=U_{\phi_1}(|x|,t)$ with initial data
$u_0(x)+\phi_1(|x|)$ satisfies
$U_{\phi_1}(0,t_0)<\tilde U(0,t_0)$.
Since $U_{\phi_1}(0,0)>\tilde U(0,0)$
and $z(U_{\phi_1}(\cdot,0)-\tilde U(\cdot,0))=1$, 
we have $\tilde U(\cdot,t)\geq U_{\phi_1}(\cdot,t)$ for $t\geq t_0$.
Since $u_{\phi_1}$ blows up in finite time,
$\tilde u$ blows up as well,
and, consequently, the solution with initial data
$u_0(x)+\psi(|x|)$ also blows up.

The proof of global existence for the initial data
$u_0(x)-\psi(|x|)$ is analogous if $\Phi(r)<\Phi(0)$ for $r>0$.
If this assumption fails then one can choose $t_0>0$ small and
replace $u_0$ and $\psi$ with
$\tilde u_0:=u(\cdot,u_0)$ and $\tilde\psi:=\tilde u_0-u_\psi(\cdot,t_0)$
where $u_\psi$ is the solution with initial data $u_0(x)-\psi(|x|)$.
Then $\tilde u_0$ radially decreasing, and it is
an $L^1$-threshold solution due to Lemma~\ref{lem-t0}
so that one use the above arguments to show the global existence of
$u_\psi$.
\end{proof}
 
%------------------------------------------------------
\begin{theorem} \label{thm2}
Any $L^1$-threshold solution is a strong threshold solution.
\end{theorem}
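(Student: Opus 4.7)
The plan is to deduce the defining property of a strong threshold solution from the radial perturbation results of Lemma~\ref{lemma4}, using the strong maximum principle together with the restart provided by Lemma~\ref{lem-t0} to bridge arbitrary $BC^+$ perturbations down to radial ones.

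Fix an $L^1$-threshold solution $u$ with initial data $u_0(x)=\Phi(|x|)$, $\Phi\in X_1$. For the blow-up direction, let $\tilde u_0\in BC^+$ satisfy $\tilde u_0\ge u_0$ and $\tilde u_0\not\equiv u_0$, and let $\tilde u$ be the corresponding solution with maximal existence time $T$. If $u$ blows up in finite time then the comparison principle and $\tilde u\ge u$ force $\tilde u$ to blow up as well. Otherwise $u$ is global; pick any $t_0\in(0,T)$ so that both $u$ and $\tilde u$ are bounded on $\R^n\times[0,t_0]$. The difference $w:=\tilde u-u\ge0$, $w\not\equiv0$, satisfies a linear parabolic equation $w_t-\Delta w=aw$ with bounded coefficient $a$, exactly as in the proof of Proposition~\ref{prop2}. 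The strong maximum principle on $\R^n$ yields $w(x,t_0)>0$ for every $x$, and by continuity there exist $\delta,R>0$ with $w(x,t_0)\ge\delta$ for $|x|\le R$. Pick a continuous nonincreasing $\psi:[0,\infty)\to[0,\infty)$, $\psi\not\equiv0$, supported in $[0,R]$ with $\psi\le\delta$, so that $u(\cdot,t_0)+\psi(|\cdot|)\le\tilde u(\cdot,t_0)$. By Lemma~\ref{lem-t0} the solution restarted at time $t_0$ with data $u(\cdot,t_0)$ is again an $L^1$-threshold solution, so Lemma~\ref{lemma4} gives that the solution $v$ with initial data $u(\cdot,t_0)+\psi(|\cdot|)$ blows up in finite time. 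Comparison then yields $\tilde u(\cdot,t+t_0)\ge v(\cdot,t)$, so $\tilde u$ blows up.

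For the global existence direction, let $\tilde u_0\le u_0$ and $\tilde u_0\not\equiv u_0$. If $u$ is global, then $\tilde u\le u$ is automatically global. Otherwise $u$ blows up at some $T<\infty$; pick $t_0\in(0,T)$, apply the strong maximum principle to $u-\tilde u$ to obtain $\delta,R>0$, choose a radial bump $\psi$ as before with $\psi(|\cdot|)\le u(\cdot,t_0)-\tilde u(\cdot,t_0)$ (in particular $u(\cdot,t_0)\ge\psi(|\cdot|)$), and apply Lemma~\ref{lemma4} (after restarting via Lemma~\ref{lem-t0}) to obtain a global solution $v$ with initial data $u(\cdot,t_0)-\psi(|\cdot|)\ge\tilde u(\cdot,t_0)$. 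Comparison yields $\tilde u(\cdot,t+t_0)\le v$, so $\tilde u$ exists globally.

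The main obstacle is precisely this passage from the radial setting, to which the $L^1$-threshold hypothesis and Lemmas~\ref{lemma3}--\ref{lemma4} are tailored, to the general $BC^+$ perturbations appearing in the definition of a strong threshold solution; applying the strong maximum principle at a small positive time and then restarting via Lemma~\ref{lem-t0} is the mechanism designed to overcome it.
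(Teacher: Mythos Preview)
Your proof is correct and follows essentially the same route as the paper's: advance to a small positive time, use strict positivity of the difference (the paper writes simply ``$u_\varphi(\cdot,\tau)>\tilde u_0$'', which is your strong maximum principle step) to insert a radial bump $\psi$, restart via Lemma~\ref{lem-t0}, and apply Lemma~\ref{lemma4}. The only difference is cosmetic: your case distinctions on whether $u$ itself is global or blows up are valid shortcuts but unnecessary, since the paper just fixes a time $\tau$ small enough to lie in the existence interval of both solutions and runs the same argument regardless.
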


\begin{proof}
Let $u$ be an $L^1$-threshold solution with initial data 
$u_0(x):=\Phi(|x|)$.
Let $\varphi:\R^n\to[0,\infty)$ be bounded and continuous, $\varphi\not\equiv0$.
We want to prove that the solution $u_\varphi$ of \eqref{Fuj} with initial
data $u_0+\varphi$ blows up in finite time.
Fix $\tau>0$ small and set $\tilde u_0(x):=u(x,\tau)$.
Since $u_\varphi(\cdot,\tau)>\tilde u_0$, there exists 
a bounded continuous function 
$\psi:[0,\infty)\to[0,\infty)$ such that
$\psi\not\equiv0$ and $u_\varphi(x,\tau)\geq \tilde u_0(x)+\psi(|x|)$.
Now the comparison principle and Lemmas~\ref{lem-t0} and~\ref{lemma4}
guarantee that $u_\varphi$ blows up in finite time.

If $0\leq\psi\leq u_0$ 
then the global existence of the solution
with initial data $u_0-\psi$ follows analogously. 
\end{proof}

%------------------------------------------------------
\begin{proof}[Proof of Theorem~\ref{thm-strong}]
If $u_0$ is a strong threshold solution
and $\eps>0$ then any functions $v_1,v_2\in X$
satisfying $v_1\leq u_0\leq v_2$, $v_i\not\equiv u_0$, $i=1,2$,
and $\|u_0-v_1\|+\|u_0-v_2\|<\eps$
fulfil the conditions in Theorem~\ref{thm-strong}.

On the other hand, if $\eps>0$ and $v_1,v_2$
are the functions in Theorem~\ref{thm-strong}
then $\Phi_\eps^-(|x|):=\min(u_0(x),v_1(x))$
and $\Phi_\eps^+(|x|):=\min(u_0(x),v_2(x))$
fulfil the conditions in Definition~\ref{deftf},
hence $u$ is an $L^1$-threshold solution.
Now Theorem~\ref{thm2} guarantees that $u$ is
a strong threshold solution.
\end{proof}

%=======================================================
\section{Examples of strong threshold solutions} \label{sec-exam}

In this section we provide examples of various global strong threshold solutions.
In addition, we also discuss possible behavior of strong threshold solutions
with initial data $\Phi(|x|)$ satisfying $z(\Phi-U_*)<\infty$.

\begin{example} \rm \label{exam-decay}
Let $p\in[p_S,p_{JL})$. Let $L^*$ be from Theorem~\ref{thm-weak},
$W(r):=L^*r^{-2/(p-1)}$, and let $r_1$ be defined by 
$W(r_1)=1$.
Consider initial data $\Phi_\alpha=\Phi_\alpha(|x|)$ of the form
\begin{equation} \label{ini-decay}
 \Phi_\alpha(r):=\begin{cases}
1 &\hbox{ if }r\leq r_1+\alpha,\\
\max(m+(r-r_1-\alpha)W'(r_1),W(r)) &\hbox{ if }r>r_1+\alpha,
\end{cases}
\end{equation}
where $\alpha\geq0$, see Figure~\ref{fig-decay}.

\def\red{\relax}
%---------------------------------------------
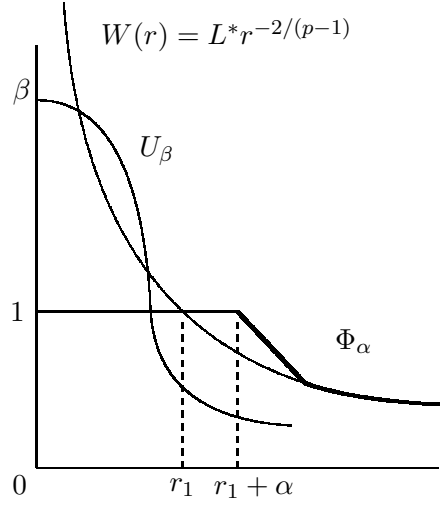
\begin{figure}[ht]
\centering
\begin{picture}(240,200)(0,0)
\unitlength=0.8pt
\put(12,12){\makebox(0,0)[c]{$0$}}
\put(110,225){\makebox(0,0)[c]{$W(r)=L^*r^{-2/(p-1)}$}}
\put(77,170){\makebox(0,0)[c]{$U_\beta$}}
\put(13,198){\makebox(0,0)[c]{$\beta$}}
\put(11,94){\makebox(0,0)[c]{$1$}}
\put(170,80)%
{\makebox(0,0)[c]{$\red\Phi_{\alpha}$}}
\put(89,11){\makebox(0,0)[c]{$r_1$}}
\put(105,11){\makebox(0,0)[c]{\strut\kern10mm$r_1+\alpha$}}
\put(20,20){\line(1,0){190}}
\put(20,20){\line(0,1){200}}
{\linethickness{1pt}\red
\put(20,94){\line(1,0){95}}
\bezier500(115,94)(131,77)(147,60)
}
\put(89,20){\line(0,1){3}}
\put(89,26){\line(0,1){3}}
\put(89,32){\line(0,1){3}}
\put(89,38){\line(0,1){3}}
\put(89,44){\line(0,1){3}}
\put(89,50){\line(0,1){3}}
\put(89,56){\line(0,1){3}}
\put(89,62){\line(0,1){3}}
\put(89,68){\line(0,1){3}}
\put(89,74){\line(0,1){3}}
\put(89,80){\line(0,1){3}}
\put(89,86){\line(0,1){3}}
\put(115,20){\line(0,1){3}}
\put(115,26){\line(0,1){3}}
\put(115,32){\line(0,1){3}}
\put(115,38){\line(0,1){3}}
\put(115,44){\line(0,1){3}}
\put(115,50){\line(0,1){3}}
\put(115,56){\line(0,1){3}}
\put(115,62){\line(0,1){3}}
\put(115,68){\line(0,1){3}}
\put(115,74){\line(0,1){3}}
\put(115,80){\line(0,1){3}}
\put(115,86){\line(0,1){3}}
\bezier500(33,240)(45,55)(210,50)  
 \bezier500(20,194)(69,194)(74,94) 
 \bezier500(74,94)(77,44)(140,40)%
{\linethickness{1pt}\red
\bezier500(147,60)(170,52)(210,50)  
}
\end{picture}
\kern5mm
\caption{Initial data $\Phi_{\alpha}$ of a time-decaying strong threshold
solution if $p\in[p_S,p_{JL})$.}
   \label{fig-decay}
\end{figure}
%--------------------------------------------------

Set also $\tilde\Phi_m(r):=\min(m,W(r))$ for $m>0$
and notice that $\Phi_0=\tilde\Phi_1$.
If $\alpha>0$ is small then the solution $u^{(\alpha)}$ with initial data
$\Phi_\alpha$
exists globally due to Proposition~\ref{prop2}
(with $u_0(x):=\Phi_0(|x|)$, $\phi:=\tilde\Phi_m-\Phi_0$, $m>1$,
$\psi:=\Phi_\alpha-\Phi_0$).
On the other hand, $u^{(\alpha)}$ blows up in finite time if $\alpha$ is
large enough.
Therefore, the threshold value
$\alpha^*:=\sup\{\alpha>0: u^{(\alpha)}\hbox{ is global}\}$
satisfies $\alpha^*\in(0,\infty)$
and Theorem~\ref{thm2} shows that 
the solution $u^*$ with initial data $\Phi_{\alpha^*}$
is a strong threshold solution.
Fix $\beta>1$ large enough such that the steady state $U_\beta$
satisfies $U_\beta(r)<W(r)$ for $r\geq r_1$.
Then $z(\Phi_\alpha-U_\beta)=1$ and Lemma~\ref{lemma1}
guarantees $u^*(0,t)<U_\beta(0)$ for all $t>0$,
hence $u^*$ is global and bounded.
Now Lemma~\ref{lemma2} shows that $u^*$ either
converges to a positive steady state or decays to zero.
Since $z(\Phi_{\alpha^*}-U_*)=1$
and $z(U_\gamma-U_*)>1$ for any $\gamma>0$,
we see that $u^*$ cannot converge to $u_\gamma$,
hence it decays to zero.  
\qed
\end{example}

\begin{example} \rm \label{exam-conv}
Let $p=p_S$.
Fix $\beta\in(0,1)$ and let $r_0$ be the (only)
zero of $U_1-U_\beta$.
Given $\alpha\in[0,1]$, set
$$
\Phi_\alpha:=\begin{cases}
 \frac12(U_1+U_\beta) &\hbox{ on }[r_0,\infty),\\
 \alpha U_1+(1-\alpha)U_\beta &\hbox{ on }[0,r_0),
\end{cases}
$$
and consider the solution 
$u^{(\alpha)}(x,t)=U^{(\alpha)}(|x|,t)$ with initial data $\Phi_\alpha(|x|)$.
Then $z(\Phi_\alpha-U_1)=z(\Phi_\alpha-U_\beta)=1$ and $z((U^{(\alpha)})_t(\cdot,t))<\infty$  
for $\alpha\in(0,1)$ and $t>0$.
Proposition~\ref{prop-GNW}
and the continuous dependence on initial data
guarantee that $u^{(\alpha)}$ blows up in finite time if $\alpha$ is close 1,
while $u^{(\alpha)}$ exists globally if $\alpha$ is close to 0.
Fix $\alpha^*=\sup\{\alpha:u^{(\alpha)}\hbox{ exists globally}\}$.
Then the corresponding threshold solution $u^{(\alpha^*)}$ exists   
globally and $U^{(\alpha^*)}(0,t)\in(U_\beta(0),U_1(0))$ for all $t$,
due to Lemma~\ref{lemma1} and the radial monotonicity of $U^{(\alpha^*)}$.
Since $z(\frac{d}{dt}U^{(\alpha^*)}(\cdot,t))<\infty$ for $t>0$, 
Lemma~\ref{lemma2} guarantees $u^{(\alpha^*)}(x,t)\to U_\gamma(|x|)$
as $t\to\infty$ for some $\gamma\in[\beta,1]$.

If $p=p_S$ and $n\geq7$ then
a more detailed description of the asymptotic behavior
of solutions converging to a positive steady state 
(and having initial data close to this steady state in the energy
space $\dot H^1$) can be found in \cite{CMR}.
\qed
\end{example}

%------------------------------------------------------
\begin{example} \rm \label{exam1}
Let $p\geq p_{JL}$.
We will construct global strong threshold solutions 
which grow up.

Fix $0<m<\tilde m$, let $R,\tilde R$
be defined by $U_*(R)=m$, $U_*(\tilde R)=\tilde m$,
and $\tilde\Phi(r):=\min(\tilde m,U_*(r))$.
Given $\alpha\geq0$, set also
\begin{equation} \label{ini-GU}
 \Phi_\alpha(r)=\Phi_{\alpha,m}(r):=\begin{cases}
m &\hbox{ if }r\leq R+\alpha,\\
\max(m+(r-R-\alpha)U_*'(R),U_*(r)) &\hbox{ if }r> R+\alpha,
\end{cases} 
\end{equation}
cf.~similar initial data in Example~\ref{exam-decay}.
Notice that $\Phi_\alpha\geq\Phi_0=\min(m,U_*)$,
the solutions with initial data $\Phi_0$ and $\tilde\Phi$ exist globally
and grow up as $t\to\infty$ due to \cite[Theorem 6.1]{PY03},
and $z(\Phi_\alpha-U_*)=1$ if $\alpha>0$.
If $\alpha>0$ is small then the solution $u^{(\alpha)}$ with initial data $\Phi_\alpha$
exists globally due to Proposition~\ref{prop2}
(with $u_0(x):=\Phi_0(|x|)$, $\phi:=\tilde\Phi-\Phi_0$, $\psi:=\Phi_\alpha-\Phi_0$).
On the other hand, $u^{(\alpha)}$ blows up in finite time if $\alpha$ is large enough. 
The comparison principle and \cite[Theorem~29.1]{SPP}
guarantee that
the strong threshold solution $u^{(\alpha^*)}$ either
grows up or blows up in finite time $T$.
If $p>p_{JL}$ then \cite[Theorem 5.28]{MM09} guarantees
that $u^{(\alpha^*)}$ grows up
(since blow-up of a threshold solution cannot be complete).

The above arguments do not guarantee the existence of a strong threshold
solutions with grow-up for $p=p_{JL}$ and, in addition, 
they are based on a nontrivial result from \cite{MM09}. 
In order to overcome these drawbacks
let us provide a more complicated construction
which --- in some sense --- combines the above arguments
and suitable modifications of the ideas in 
\cite{PY14} (cf.~also  Example~\ref{exam3} below).

Fix $m_1\in(0,1)$, set $\Phi^{(1)}_\alpha:=\Phi_{\alpha,m_1}$,
where $\Phi_{\alpha,m}$ is defined by \eqref{ini-GU},
and let $\alpha_1^*$ be the threshold value.
If the strong threshold solution with initial data $\Phi^{(1)}_{\alpha_1^*}$
exists globally then it grows up and we are done.
If this solution blows up in finite time 
then choose 
$\alpha_1\in(0,\alpha_1^*)$ such that 
$v_1(x):=(\Phi^{(1)}_{\alpha_1^*}-\Phi^{(1)}_{\alpha_1})(|x|)$ satisfies
$\|v_1\|<1$, where
$\|\cdot\|$ denotes the norm in $L^\infty\cap L^1(\R^n)$.
Fix also $T_1:=1$ and $\tilde m_1\in(0,m_1)$ such that
$\Phi^{(1)}_{\alpha_1}(r)=U_*(r)$ whenever $\Phi^{(1)}_{\alpha_1}(r)<\tilde m_1$.
Let $u^{(1)}$ be the (global) solution with initial data
$\Phi^{(1)}_{\alpha_1}$.
The continous dependence on initial data 
guarantees the existence of $m_2\in(0,\min(\tilde m_1,1/2))$
such that the solution $u$ of \eqref{Fuj} exists on $[0,T_1]$
whenever $\|u_0-u^{(1)}(\cdot,0)\|_\infty\leq m_2$.

Next consider initial data of the form
$\Phi^{(2)}_\alpha:=\max(\Phi^{(1)}_{\alpha_1},\Phi_{\alpha,m_2})$ where
$\alpha>0$.
Proposition~\ref{prop2} and Remark~\ref{rem-prop} 
(used with
$u_0(x):=\Phi^{(1)}_{\alpha_1}(|x|)$, 
$\phi:=\Phi^{(1)}_{(\alpha_1^*+\alpha_1)/2}-\Phi^{(1)}_{\alpha_1}$,
$\psi:=\Phi^{(2)}_\alpha-\Phi^{(1)}_{\alpha_1}$)
guarantee that the corresponding solution
exists globally if $\alpha$ is small
and, obviously,
the solution blows up if $\alpha$ is large.
Let $\alpha_2^*$ be the threshold value.
If the strong threshold solution with initial data $\Phi^{(2)}_{\alpha_2^*}$    
exists globally then it grows up and we are done.
If this solution blows up in finite time then choose
$\alpha_2\in(0,\alpha_2^*)$ such that 
$v_2(x):=(\Phi^{(2)}_{\alpha_2^*}-\Phi^{(2)}_{\alpha_2})(|x|)$ satisfies
$\|v_2\|<1/2$.    
Fix also $T_2:=2$ and $\tilde m_2\in(0,m_2)$ such that
$\Phi^{(2)}_{\alpha_2}(r)=U_*(r)$ whenever $\Phi^{(2)}_{\alpha_2}(r)<\tilde m_2$.    
Let $u^{(2)}$ be the (global) solution with initial data
$\Phi^{(2)}_{\alpha_2}$.    
The continous dependence on initial data 
guarantees the existence of $m_3\in(0,\min(\tilde m_2,1/3))$
such that the solution $u$ of \eqref{Fuj} exists on $[0,T_2]$
whenever $\|u_0-u^{(2)}(\cdot,0)\|_\infty\leq m_3$.

Next consider initial data of the form
$\Phi^{(3)}_\alpha:=\max(\Phi^{(2)}_{\alpha_2},\Phi_{\alpha,m_3})$ where
$\alpha>0$, and proceed as above.
If the process does not stop at any final step then we find
sequences $T_k=k$, $m_1>m_2>\dots0$, $m_k<1/k$, 
a nondecreasing sequence of global solutions $u^{(k)}$,
functions $v_k\geq0$ satisfying $\|v_k\|<1/k$,
$u^{(k)}(\cdot,0)+v_k\in X$.
In addition,
the solution of \eqref{Fuj} exists on $[0,T_k]$
whenever $0\leq u_0\leq u^{(k)}(\cdot,0)+m_{k+1}$,
we also have
$u^{(j)}(\cdot,0)\leq u^{(k)}(\cdot,0)+m_{k+1}$ for all $j>k$,
and the solutions with initial data $u^{(k)}(\cdot,0)+v_k$
blow up in finite time.
It is now easy to see that the solution $u^*$ with initial data
$u^*_0:=\lim_{k\to\infty}u^{(k)}(\cdot,0)$
is a global strong threshold solution which grows up.
In fact, the estimate 
$u^*_0\leq u^{(k)}(\cdot,0)+m_{k+1}$ for each $k$
guarantees that $u^*$ is global.
Since the solutions with initial data
$\max(u^*(\cdot,0),u^{(k)}(\cdot,0)+v_k)\in X$ blow up in finite time
for each $k$, Theorem~\ref{thm-strong} guarantees that
$u^*$ is a strong threshold solution.
The comparison principle and \cite[Theorem~29.1]{SPP}
guarantee that this solution grows up.
\qed
\end{example}

%------------------------------------------------------
Assume $p\geq p_{JL}$.
If $u$ is a threshold solution with compactly supprted  initial data in $X$ 
then $u$ is a strong threshold solution due to
Theorem~\ref{thm2}, and it is also known that such solution blows up.
Example~\ref{exam1} shows that
a strong threshold solution of \eqref{Fuj}
with initial data in $X$
can grow up, 
and Example~\ref{exam3} will show that it can also satisfy \eqref{osc0}.
In the following proposition we show that
if a strong threshold solution with initial data $u_0\in X$ 
neither blows up
nor grows up then $u_0$ has to intersect the singular
steady state infinitely many times.

\begin{proposition}  \label{prop-GB}
Assume $p\geq p_{JL}$. Let $u$ be a strong threshold solution
with initial data $u_0\in X$, and let  $u_0(x)=\Phi(|x|)$
where $z(\Phi-U_*)<\infty$. Then either $u$ 
grows up or blows up in finite time.
\end{proposition}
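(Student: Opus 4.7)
The plan is to argue by contradiction: suppose $u$ is a strong threshold solution with $z(\Phi-U_*)<\infty$, $u$ is global, and $u$ does not grow up. Since $u_0\in X$ forces $U(\cdot,t)$ to be radially nonincreasing, ``not growing up'' means $\|u(\cdot,t)\|_\infty = U(0,t)$ is uniformly bounded, so $u$ is a bounded global solution.

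The first step is to show that $U(\cdot,t) \to U_\beta$ in $C_\loc$ as $t\to\infty$ for some $\beta \in [0,\infty)$ (with the convention $U_0 \equiv 0$). By the Sturm zero-number theorem applied to $U-U_*$ (which satisfies a linear parabolic equation with bounded coefficient on $r>0$), $z(U(\cdot,t)-U_*)$ is finite and eventually constant, and since $U_*$ is unbounded at the origin while $U$ is bounded, the crossings with $U_*$ stay in a compact subset of $(0,\infty)$. I would then invoke Lemma~\ref{lemma2}; its remaining hypothesis $z(U_t(\cdot,t_0))<\infty$ I would extract by comparing $U_t(\cdot,t_0)$ with the time-shift difference $W_h(r) := U(r,t_0+h)-U(r,t_0)$ for $h>0$ small (which solves a linear parabolic equation, so $z(W_h(\cdot,t))$ is nonincreasing; combined with the stable crossing pattern with $U_*$ one gets a bound uniform in $h$), together with upper semicontinuity of $z$ as $h\to 0^+$. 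Radial monotonicity of $U$ and $U_\beta$ then upgrades the $C_\loc$ convergence to $\|u(\cdot,t)\|_\infty = U(0,t) \to \beta$.

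The contradiction comes from the stability statements in Proposition~\ref{prop-GNW}(ii) together with Lemma~\ref{lem-t0} and Theorem~\ref{thm2}, which ensure that the solution starting at $u(\cdot,t_0)$ is still a strong threshold solution, so every small nonnegative bump perturbation must blow up in finite time. Case $\beta > 0$: pointwise convergence $u(\cdot,t_0)\to U_\beta$ together with dominated convergence (both sides bounded and decaying at infinity) gives $u(\cdot,t_0)\to U_\beta$ in the weighted Lebesgue space in which $U_\beta$ is stable; for $t_0$ large, a small nonnegative bump $\psi\not\equiv 0$ lies in the stability basin of $U_\beta$, so the perturbation does not blow up, contradicting the strong threshold property. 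Case $\beta = 0$: $\|u(\cdot,t_0)\|_\infty$ is arbitrarily small, $\Phi(r)\to 0$ as $r\to\infty$ (forced by global existence via ODE comparison with $v'=v^p$), and heat-kernel-type control preserves this decay at infinity; so for $t_0$ large and $\psi$ a sufficiently small compactly supported bump one obtains $u(\cdot,t_0)+\psi \leq \lambda U_\alpha$ for some $\alpha>0$ and $\lambda<1$, and Proposition~\ref{prop-GNW}(ii) yields global existence of the perturbation, again contradicting strong threshold.

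I expect the main obstacle to be the bound $z(U_t(\cdot,t_0))<\infty$ in the first step, since the finiteness hypothesis $z(\Phi-U_*)<\infty$ does not transfer directly to the time derivative. A workable backup, if this approach fails, is to bypass Lemma~\ref{lemma2} and analyze the $\omega$-limit set of $u$ in $C_\loc$ directly: by parabolic regularity and radial monotonicity, $\omega(u)$ consists of entire nonnegative radially nonincreasing solutions of \eqref{Fuj}, each of which, by the zero-number argument used in the proof of Lemma~\ref{lemma2} (applied to $V(\cdot,t+\tau)-V(\cdot,t)$), must be a steady state $U_\alpha$; connectedness of $\omega(u)$ together with the ordered one-parameter family $\{U_\alpha\}_{\alpha\geq 0}$ then pin the limit down to a single $U_\beta$. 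A secondary delicate point is Case $\beta = 0$, where verifying $u(\cdot,t_0)+\psi \leq \lambda U_\alpha$ requires controlling the large-$r$ decay of $u(\cdot,t_0)$ uniformly for $t_0$ large.
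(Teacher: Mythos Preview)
Your approach is genuinely different from the paper's, and it has a real gap at the first step. The reduction ``$u$ global and not growing up $\Rightarrow$ $u$ bounded'' hinges on eventual monotonicity of $U(0,t)$, hence on $z(U_t(\cdot,t_0))<\infty$. Your proposed route via $W_h=U(\cdot,t_0+h)-U(\cdot,t_0)$ does not deliver this: for each fixed $h>0$ the zero number $z(W_h(\cdot,t))$ is finite and nonincreasing, but nothing bounds it uniformly as $h\to0$; the hypothesis $z(\Phi-U_*)<\infty$ controls crossings of $U$ with the \emph{fixed} function $U_*$, not sign changes of time increments of $U$. Lower semicontinuity of $z$ then only gives $z(U_t(\cdot,t_0))\le\liminf_h z(W_h(\cdot,t_0))$, which is vacuous without the uniform bound. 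Your backup through the $\omega$-limit set has the same circularity: forming $\omega(u)$ in $C_\loc$ and identifying its elements as bounded entire solutions already presupposes local-in-time uniform bounds on $u$, i.e.\ boundedness. So the oscillating scenario $\liminf U(0,t)<\infty=\limsup U(0,t)$ is never excluded, and the contradiction argument cannot start. Even granting convergence to $U_\beta$ in $C_\loc$, the two cases are not closed: for $\beta>0$ the stability in Proposition~\ref{prop-GNW}(ii) is in a weighted norm controlling the tail, and $C_\loc$ convergence does not imply convergence there; for $\beta=0$ the inequality $u(\cdot,t_0)+\psi\le\lambda U_\alpha$ with $\lambda<1$ forces $u(r,t_0)r^{2/(p-1)}<\lambda L$ at infinity, yet the hypothesis permits $\Phi(r)r^{2/(p-1)}\to L$ and you give no mechanism making the spatial tail strictly smaller for finite $t_0$.

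The paper avoids all of this. It first observes that $z(\Phi-U_*)<\infty$ forces either $\Phi\ge U_*$ or $\Phi\le U_*$ for large $r$; the former case is dispatched by comparison and \cite[Theorem~29.1]{SPP}. In the latter case it takes the growing-up strong threshold solution $u^G$ constructed in Example~\ref{exam1}, with initial data $\Phi^G$ chosen so that $\Phi^G\le m_1<\Phi(0)$ on a large ball and $\Phi^G\ge U_*\ge\Phi$ outside; this gives $z(\Phi-\Phi^G)=1$ with $\Phi(0)>\Phi^G(0)$. A zero-number argument exactly parallel to Lemma~\ref{lemma1} (using that small upward perturbations of $\Phi$ blow up, by the strong threshold property) yields $u(0,t)>u^G(0,t)$ for all $t$ in the existence interval of $u$. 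Since $u^G(0,t)\to\infty$, $u$ must grow up or blow up. No convergence, no stability, no tail estimates are needed.
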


\begin{proof}
Since $z(\Phi-U_*)<\infty$ there exists $r_0>0$ such that
either $\Phi(r)\geq U_*(r)$ for all $r\geq r_0$
or $\Phi(r)\leq U_*(r)$ for all $r\geq r_0$.
In the former case, the conclusion follows from
the comparison principle and \cite[Theorem 29.1]{SPP}.
In the latter case, 
fix $m_1\in(0,\min(1,\Phi(0),U_*(r_0)))$
and let $r_1>r_0$ be defined by $U_*(r_1)=m_1$. 
Example~\ref{exam1}(iii) guarantees the existence
of a growing-up strong threshold solution $u^G$
with radially nonincreasing initial data $u^G(x,0)=\Phi^G(|x|)$
satisfying $\Phi^G(r)=m_1$ for $r\leq r_1$,
$\Phi^G(r)\geq U_*(r)$ for $r>r_1$,
$\Phi^G\not\equiv U_*$ on $(r_1,\infty)$.
Since both $\Phi,\Phi^G$ are initial data
of strong threshold solutions, $\Phi^G\geq U_*\geq\Phi$
on $[r_1,\infty)$, $\Phi^G\equiv m_1<\Phi(0)$ on $[0,r_1]$
and $\Phi$ is nonincreasing on $[0,r_1]$, we have 
$\Phi(0)>\Phi^G(0)$ and
$z(\Phi-\Phi^G)=1$.
Fix $\delta>0$ such that $\Phi>\Phi^G$ on $[0,\delta]$,
and fix also
$\varphi\in X_1\setminus\{0\}$ with support in $[0,\delta]$.
Then the solution $u^{(\alpha)}$ with initial data
$(\Phi+\alpha\varphi)(|x|)$ blows up in finite time for any $\alpha>0$,
and $z(\Phi+\alpha\varphi-\Phi_{\alpha^*})=1$.
The same arguments as in Lemma~\ref{lemma1} imply
\begin{equation} \label{uuG0}
\hbox{$u(0,t)>u^G(0,t)$ for all $t<T$,}
\end{equation} 
where $T$ denotes the maximal existence time of $u$.
In fact, assume on the contrary $u(0,t_0)\leq u^G(0,t_0)$ for some $t_0<T$.
Then $u<u^G$ for $t>t_0$ by the zero number properties.
Fixing $t_1\in(t_0,T)$, there exists $\alpha>0$ small
such that $u^{(\alpha)}(0,t_1)<u^G(0,t_1)$,
hence $u^{(\alpha)}<u^G$ for $t\geq t_1$.
Since $u^{(\alpha)}$ blows up and $u^G$ is global,
this yields a contradiction.

Since $u^G(0,t)\to\infty$ as $t\to\infty$, 
the conclusion follows from \eqref{uuG0}.
\end{proof}

%------------------------------------------------
\begin{example} \rm \label{exam3}
Assume $p>p_S$. We will use modifications of the arguments in \cite{PY14}
in order to find strong threshold solutions satisfying \eqref{osc0}.

Given $\eps,R>0$, set 
$$\varphi_{\eps,R}(r):=
\begin{cases}
\eps &\hbox{ if }r\in[0,R],\\
\eps(R+1-r) &\hbox{ if }r\in(R,R+1),\\
0 &\hbox{ if }r\geq R+1. 
\end{cases}
$$
Scaling and comparison with solutions 
of the Cauchy-Dirichlet problem in a ball
show that, fixing $\eps>0$, 
the solution $u=u_{\eps,R}$
of \eqref{Fuj} with initial data 
$u_0(x)=\varphi_{\eps,R}(|x|)$
blows up in finite time
if $R$ is large enough.
In addition, there exists $\eps_1\in(0,1)$ small such that
the solution $u_1^R:=u_{\eps_1,R}$
exists globally if $R$ is small enough.
Set $R^*_1:=\sup\{R>0:u_1^R \hbox{ exists globally}\}$.
Then $u_1^{R^*_1}$ is a strong threshold solution with compact support, hence 
it blows up in a finite time $T^*_1$, see \cite[Theorem 5.15]{MM09},
for example. 
Fix $T_1\in(0,T^*_1)$ such that $\|u_1^{R^*_1}(\cdot,T_1)\|_\infty>1$
and
denote by $\|\cdot\|$ the norm in $L^\infty\cap L^1(\R^n)$.
Fix $R_1\in(0,R^*_1)$ such that 
the solution $u_1^{R_1}$ satisfies $\|u_1^{R_1}(\cdot,T_1)\|_\infty>1$
and the function 
$v_1:=(u_1^{R^*_1}-u_1^{R_1})(\cdot,0)$
satisfies $\|v_1\|<1$. 
The solution $u^{(1)}:=u_1^{R_1}$ with initial data
$u^{(1)}_0(x)=\varphi_{\eps_1,R_1}(|x|)$
exists globally and decays to zero (see \cite[Theorem 5.15]{MM09}),
hence there exists $\tilde T_1>\max(T_1,1)$ such that $\|u^{(1)}(\cdot,\tilde T_1)\|_\infty<1$.
The continuous dependence on initial data in $L^\infty$ guarantees the existence
of $\eps_2\in(0,\min(\eps_1,1/2))$ such that the solution of \eqref{Fuj} exists on $[0,\tilde T_1]$
and satisfies $\|u(\cdot,\tilde T_1)\|_\infty<1$ whenever
its initial data $u_0$ satisfy $0\leq u_0\leq u^{(1)}(\cdot,0)+\eps_2$.

Next consider solutions $u_2^R$  with initial data of the form 
$$u_2^R(x,0)=\max(u^{(1)}(x,0),\varphi_{\eps_2,R}(|x|)),\quad R>R_1+\eps_1-\eps_2.$$
Since these solutions exists globally if $R$ is close to $R_1+\eps_1-\eps_2$
and blow up in finite time if $R$ is large,
there exists a threshold value $R^*_2>R_1+\eps_1-\eps_2$.
The corresponding strong threshold solution $u_2^{R^*_2}$
blows up in a finite time $T^*_2>\tilde T_1$.
As above, we find $R_2\in(R_1,R^*_2)$ and $T_2\in(\tilde T_1,T^*_2)$ 
such that 
the solution $u_2^{R_2}$ satisfies $\|u_2^{R_2}(\cdot,T_2)\|_\infty>2$
and the function $v_2:=(u_2^{R^*_2}-u_2^{R_2})(\cdot,0)$
satisfies $\|v_2\|<1/2$.
The solution $u^{(2)}:=u_2^{R_2}$ exists globally and decays to zero,
hence there exists $\tilde T_2>\max(T_2,2)$ such that $\|u^{(2)}(\cdot,\tilde T_2)\|_\infty<1/2$.
The continuous dependence on initial data in $L^\infty$ guarantees the existence
of $\eps_3\in(0,\min(\eps_2,1/3))$ such that the solution of \eqref{Fuj} exists on $[0,\tilde T_2]$
and satisfies $\|u(\cdot,\tilde T_2)\|_\infty<1/2$ whenever
its initial data $u_0$ satisfy $0\leq u_0\leq u^{(2)}(\cdot,0)+\eps_3$.
 
Next we consider solutions with initial data
$$u_3^R(x,0)=\max(u^{(2)}(x,0),\varphi_{\eps_3,R}(|x|)),\quad R>R_2+\eps_2-\eps_3,$$
and proceed as above.
By induction we find a nondecreasing sequence of global solutions
$u^{(k)}$ and sequences $\eps_1>\eps_2>\dots$,
$0<T_1<\tilde T_1<T_2<\tilde T_2<T_3<\dots$
such that $\eps_k<1/k$, $\tilde T_k>k$,
$\|u^{(k)}(\cdot,T_k)\|>k$, 
$u^{(j)}(\cdot,0)\leq u^{(k)}(\cdot,0)+\eps_{k+1}$ if $j>k$,
$\|u(\cdot,\tilde T_k)\|<1/k$
whenever $0\leq u(\cdot,0)\leq u^{(k)}(0,\cdot)+\eps_{k+1}$,
and the solutions with initial data $u^{(k)}(\cdot,0)+v_k\in X$ blow up
in finite time for suitable $v_k$ satisfying $\|v_k\|<1/k$.
Let $u^*$ be the solution with initial data
$u^*_0:=\lim_{k\to\infty}u^{(k)}(\cdot,0)\in X$.
Since $u^*_0\leq u^{(k)}(\cdot,0)+\eps_{k+1}$ for each $k$,
the solution $u^*$ exists globally.
Since the solutions with initial data
$\max(u^*(\cdot,0),u^{(k)}(\cdot,0)+v_k)\in X$ blow up in finite time
for each $k$, Theorem~\ref{thm-strong} guarantees that
$u^*$ is a global strong threshold
solution.
Obviously, $u^*$ satisfies \eqref{osc0}.
\qed
\end{example}

%-----------------------------------------------------------------
\begin{remark} \label{rem-pS} \rm
(i) Assume $p=p_S$.
As mentioned above,
\cite[Conjecture~1.1]{FK12} suggests that
if the initial data behave like $|x|^{-\gamma}$ as
$|x|\to\infty$, then the threshold solution 
$u^*$ should grow up if $n=3$ and $\gamma>1$
(or $n=4$ and $\gamma>2$).
A rigorous result for $n=3$ has recently been obtained in \cite{dPMW16}.
More precisely, if $n=3$ and $p=p_S$ then
the results in \cite{dPMW16} guarantee the existence
of radially symmetric initial data $u_0^G\in BC^+$ such that
the corresponding solution $u^G$ of \eqref{Fuj} satisfies
$\lim_{t\to\infty}u^G(0,t)=\infty$.
Let us show that this implies the existence of
a strong threshold solution (with initial data in $X$)
which grows up.
In fact, set $M:=\|u_0^G\|_\infty$ and let $u_0^G(x)=U_0^G(|x|)$.
Taking $\delta$ small, the solution of \eqref{Fuj}
with initial data satisfying $0\leq u_0(x)\leq 3M$
if $|x|\leq2\delta$, $u_0(x)=0$ if $|x|>2\delta$,
exists globally and decays to zero, see \cite[Theorem 20.15]{SPP}.
Set $r_0:=\min\{r\in(0,2\delta]:U_0^G(r)=4M-2Mr/\delta\}$
and notice that $r_0>\delta$.
Given $r\in(r_0,2\delta)$,
set $\tilde U_0^G(r):=\min\{U_0^G(\rho):\rho\in[r_0,r]\}$.
Set also
$$
\Phi(r):=
\begin{cases}
 2M &\hbox{ if }r\in[0,\delta],\\
 4M-2Mr/\delta &\hbox{ if }r\in(\delta,r_0],\\
 \min(4M-2Mr/\delta,\tilde U_0^G(r)) &\hbox{ if }r\in(r_0,2\delta],\\
 0 &\hbox{ if }r>2\delta.
\end{cases}   
$$
Then $\Phi\in X_1$ and the solution of \eqref{Fuj}
with initial data $u(x,0)=\Phi(|x|)$ exists globally and decays to zero.
This fact and the definition of $\Phi$ also imply $z(\Phi-U_0^G)=1$.
Given $\alpha>0$, set
$\Phi_\alpha:=\Phi+\alpha\varphi$,
where $\varphi\in X_1\setminus\{0\}$ has
support in $[0,\delta]$.
Then $z(\Phi_\alpha-U_0^G)=1$ for all $\alpha$ and
the solution of \eqref{Fuj} with initial data
$u_0(x)=\Phi_\alpha(|x|)$ exists globally or blows up
in finite time 
if $\alpha$ is small or large, respectively.
Let $u^*(x,t)$ denote the corresponding (strong) threshold solution.
The global existence of threshold solutions
(for $p=p_S$ and initial data in $X$)
and the proof of \eqref{uuG0} 
guarantee 
$u^*(0,t)>u^G(0,t)$ for all $t>0$.
Since $u^G(0,t)\to\infty$ as $t\to\infty$, $u^*$ has to grow up.

(ii) Arguments in (i) suggest that if $p=p_S$ and $n=3$ (or $n=4$) 
then threshold solutions with compactly supported initial data in $X$
grow up. If this is true and if, in addition,
the corresponding sub-threshold solutions decay to zero as $t\to\infty$
(which is also plausible) 
then one can easily modify the arguments in Example~\ref{exam3}
in order to find a strong treshold solution satisfying \eqref{osc0}.
\qed
\end{remark}

%------------------------------------------------------------------
\section{Proofs of Theorem~\ref{thm-weak} for $p<p_{JL}$ and Theorem~\ref{thm-strong2}} 
\label{sec5}

%--------------------------------------------------
In this section we will prove Theorem~\ref{thm-weak} for $p<p_{JL}$
and Theorem~\ref{thm-strong2}.

Given $\ell,m>0$, set
\begin{equation} \label{Phi-exam2}
\Phi_\ell(r)=\Phi_{\ell}(r;m):=\min(m,\ell r^{-2/(p-1)})
\end{equation} 
and notice that
\begin{equation} \label{PhiInv}
\Phi_\ell(r;\lambda^{2/(p-1)}m)=\lambda^{2/(p-1)}\Phi_\ell(\lambda r;m),
\qquad \lambda>0.
\end{equation}
Fix $m>0$.
The scaling invariance of \eqref{Fuj}, \eqref{PhiInv}
and \cite[Theorem 20.6]{SPP} guarantee that
the solution $u^{(\ell)}$ with initial data
$\Phi_\ell$ exists globally. 
On the other hand, $u^{(\ell)}$ blows up
in finite time if $\ell$ is large enough,
due to scaling arguments and \cite[Theorem 17.1]{SPP}.
Set $\ell^*:=\sup\{\ell:u^{(\ell)}\hbox{ is global}\}$
and let $u^{[m]}:=u^{(\ell^*)}$.
Notice that $\ell^*>L$ if $p\geq p_S$ due to \cite{SW03}
and that $\ell^*$ is independent of $m$ due to
the scaling invariance of \eqref{Fuj} and \eqref{PhiInv}.
If $p<p_S$ then the universal bounds in \cite{PQS2}
and continuous dependence on initial data guarantees
that $u^{[m]}$ is global and decays to zero as $t\to\infty$.
If $p\in[p_S,p_{JL})$ then choosing $\ell_1\in(L,\ell^*)$
and $\beta>m$ large enough
we have $U_\beta(r)<\ell_1r^{-2/(p-1)}$ for all
$r\geq r_1$, where $r_1$ is defined by $m=\ell_1r_1^{-2/(p-1)}$.
Consequently, $z(\Phi_\ell-U_\beta)=1$ for all $\ell$ close to $\ell^*$,
and the same arguments as in Example~\ref{exam-decay}
guarantee that $u^{[m]}$ is global and decays to zero as $t\to\infty$.

Since $\Phi_{\ell^*}(\cdot;m_1)\leq\Phi_{\ell^*}(\cdot;m_2)$
if $m_1\leq m_2$, $u^{[m]}$ is a weak threshold solution for any $m$,
and $u^{[m]}\nearrow \tilde u$ as $m\to\infty$,
where $\tilde u$ is the minimal weak solution of \eqref{Fuj}
with initial data $\tilde u_0(x)=\ell^*|x|^{-2/(p-1)}$. 
Due to the scaling invariance, given $\lambda>0$, we have
$u^{[\Lambda m]}(x,t)=\Lambda u^{[m]}(\lambda x,\lambda^2t)$,
where $\Lambda:=\lambda^{2/(p-1)}$.
Passing to the limit as $m\to\infty$ we see that $\tilde u$ is self-similar.
In addition, the global existence of $u^{[m]}$
guarantees a uniform bound for $u^{[m]}(\cdot,t)$ in $L^1_{\loc}(\R^n)$
and $u^{[m]}$ in $L^p_{\loc}(\R^n\times[0,\infty))$,
hence the same bounds are true for $\tilde u$.
Since $u^{[m]}$, hence $\tilde u$ are radially nonincreasing,
these bounds guarantee that $\tilde u$ is a global classical
solution in $(\R^n\setminus\{0\})\times(0,\infty)$.
Consequently, $\tilde u(x,t)=t^{-1/(p-1)}w(|x|/\sqrt{t})$,
where $w$ is a classical positive solution of the equation
\begin{equation} \label{eqw}
 w''+\bigl(\frac{n-1}r+\frac r2\Bigr)w'+\frac1{p-1}w+w^p=0
\qquad \hbox{for }r>0.
\end{equation}

If $p<p_S$ then the universal bounds in \cite{PQS2}
guarantee that $w$ is bounded.
Let $p\geq p_S$.
Given $x\ne0$, we have $\lim_{t\to0}u^{[m]}(x,t)=\ell^*|x|^{-2/(p-1)}$
for $m$ large enough. Since $\tilde u\geq u^{[m]}$, we have 
$\liminf_{t\to0}\tilde u(x,t)\geq \ell^*|x|^{-2/(p-1)}$,
hence 
\begin{equation}\label{winfty}
\liminf_{r\to\infty}w(r)r^{2/(p-1)}\geq\ell^*>\ell_1>L.
\end{equation}
In addition, since $u^{[m]}(x,t)=U^{[m]}(|x|,t)$
and $z(U^{[m]}(\cdot,t)-U_*)\leq1$, we also
have
\begin{equation} \label{zw}
z(w-U_*)\leq 1.
\end{equation}
The uniqueness result for singular solutions of \eqref{eqw} in \cite{Quniq}, 
\eqref{winfty} and \eqref{zw} guarantee that $w$ is bounded.
The boundedness of $w$ (for both $p<p_S$ and $p\geq p_S$)
guarantees that $w\in C^2([0,\infty))$ and $w$ satisfies
the initial condition $w'(0)=0$ (see \cite{Quniq}). 
In addition, $w$ is nonincreasing.
Since 
$\ell(w):=\lim_{r\to\infty}r^{2/(p-1)}w(r)$  is well defined
for any bounded solution $w$ of \eqref{eqw}) due to \cite{HW82}, 
\eqref{winfty} implies $\ell(w)\geq\ell^*$.

It is known (see \cite{Nai12} and the references therein)
that there exists a unique bounded solution $w^*$ of \eqref{eqw}
for which the value $\ell(w^*)$ is maximal. 
Then  
$$v(x,t):=(t+1)^{-1/(p-1)}w^*(|x|/\sqrt{t+1})$$
is a global solution of \eqref{Fuj} with positive
initial data $v_0(x)=w^*(|x|)$ so that the choice of
$\ell^*$ guarantees
\begin{equation}\label{alphaell}
\ell^*\geq\ell(w^*).
\end{equation}
Now $\ell^*\leq\ell(w)\leq\ell(w^*)$ 
and \eqref{alphaell} imply $\ell(w^*)=\ell^*$,
hence $w=w^*$.

Let $u_0\in BC^+$ and let
$u$ denote the solution of \eqref{Fuj}.
First assume  $u_0(x)|x|^{2/(p-1)}\leq\ell^*$.
Choose $m>0$ such that 
$u_0(x)\leq\Phi_{\ell^*}(|x|;m)$.
Then 
$$u(x,t)\leq u^{[m]}(x,t)\leq\tilde u(x,t)
 =t^{-1/(p-1)}w(|x|/\sqrt{t}),\quad x\in\R^n,\ t>0,$$
hence $u$ exists globally and 
$\|u(\cdot,t)\|_\infty\leq Ct^{-1/(p-1)}$.

Next assume $A:=\liminf_{|x|\to\infty}u_0(x)|x|^{2/(p-1)}>\ell^*$.
Then there exists $R>0$ such that $u_0(x)|x|^{2/(p-1)}>(A+\ell^*)/2$
for $|x|\geq R$.
Set $\hat u_0(x)=\hat\Phi(|x|)$, where
$$ 
\hat\Phi(r):= \begin{cases} 
0 &\hbox{ for $r\leq R$}, \\
\frac12 (A+\ell^*) r^{-2/(p-1)} &\hbox{ for $r\geq R+1$}, \\
(r-R)\hat\Phi(R+1) &\hbox{ for $r\in(R,R+1)$},
\end{cases}
$$
and let $\hat u(x,t)=\hat U(|x|,t)$ be the solution of \eqref{Fuj}
with initial data $\hat u_0$.
Then $0\leq\hat u_0\leq u_0$ and it is sufficient to prove that
$\hat u$ blows up in finite time.
Fix $t_0>0$ small and denote $c_0:=\hat u(0,t_0)>0$.
Since $\ell:=(A+3\ell^*)/4>\ell^*$,
the solutions $v^{[m]}(x,t)=V^{[m]}(|x|,t)$ with initial data $\Phi_{\ell}(|x|;m)$
blow up in finite time for any $m>0$.
Notice that  $V^{[m]}(0,0)=m>0=\hat U(0,0)$ and
$z(V^{[m]}(\cdot,0)-\hat U(\cdot,0))=1$.
In addition, the continuous dependence of solutions on initial data 
in $L^\infty$ guarantees $V^{[m]}(0,t_0)<c_0=\hat U(0,t_0)$ for $m$
small enough, hence $z(V^{[m]}(\cdot,t_0)-\hat U(\cdot,t_0))=0$,
$V^{[m]}(\cdot,t_0)\leq \hat U(\cdot,t_0)$.
Since $v^{[m]}$ blows up in finite time and 
$\hat u(\cdot,t)\geq v^{[m]}(\cdot,t)$ for $t\geq t_0$, 
the solution $\hat u$ blows up in finite time as well. 

It remains to prove that
the solution of \eqref{Fuj} with initial data $\tilde u(\cdot,t_0)$
is a strong threshold solution for any $t_0>0$.
Without loss of generality, we can assume $t_0=1$.
In fact, once we prove the assertion for $t_0=1$,
the assertion for $t_0>1$ follows from Lemma~\ref{lem-t0}
and Theorem~\ref{thm2},
and the assertion for $t_0\in(0,1)$ follows from the comparison principle.

Set $u_0(x):=\tilde u(x,1)=w(|x|)$ and notice that $u_0\in X$.
Since the corresponding solution $u(x,t)=\tilde u(x,t+t_0)$
exists globally, fixing $\eps>0$ is it sufficient to find
$\varphi\in X_1$ such that $\|\varphi(|\cdot|)\|<\eps$ and 
the solution $\hat u$ of \eqref{Fuj} with initial data $u_0(x)+\varphi(|x|)$ 
blows up in finite time, 
see Theorem~\ref{thm-strong}.

Let $\hat w$ be defined by
$$ \hat u(x,t)=(t+1)^{-1/(p-1)}\hat w(r,s), \qquad 
      r=\frac{|x|}{\sqrt{t+1}}, \ s=\log(t+1). $$
Then $\hat u(x,0)=\hat w(|x|,0)$, ${\hat w}_r(0,s)=0$, and
\begin{equation}  \label{eqz}
 {\hat w}_s={\hat w}_{rr}+\bigl(\frac{n-1}r+\frac r2\Bigr){\hat w}_r
             +\frac1{p-1}{\hat w}+{\hat w}^p,  \quad r,s>0. 
\end{equation}
Notice that ${\hat w}_0(r):={\hat w}(r,0)$ satisfies ${\hat w}_0=w+\varphi$,
and $w$ is a stationary solution of \eqref{eqz}.

\cite[Lemma 3.2(i), Remark 3.7 and the proof of Lemma~D.1]{Nai06}
guarantee that there exists a positive function $\phi\in C^2([0,\infty))$ such that
$\phi(|\cdot|)\in L^1(\R^n)$,  $\phi'(0)=0$ and   
$$ \phi_{rr}+\bigl(\frac{n-1}r+\frac r2\Bigr)\phi_r+\frac1{p-1}\phi+pw^{p-1}\phi=0,\quad r>0.$$
It is easy to see that the solution of this equation has to be nonincreasing, 
hence $\phi\in X_1$.
Taking a suitable positive multiple of $\phi$ we may assume $\|\phi(|\cdot|)\|<1$.
Set  $\varphi:=\eps \phi$.

Then $(\hat w)_s(\cdot,0)=(w+\varphi)^p-w^p-pw^{p-1}\varphi>0$,
hence $\hat w$ is a time-increasing solution.
Assume on the contrary that $\hat w$ (hence also $\hat u$) is global.  
If $p<p_S$ then the universal estimate in \cite{PQS2} for $\hat u$ 
guarantee that $\hat w$ is bounded,
hence ${\hat w}(\cdot,t)$ has to converge to a bounded steady state lying above
$w$, which contradicts the nonexistence of such steady states. 
Next assume $p\geq p_S$
and notice that $z({\hat w}_0-U_*)=1$ if $p=p_S$ and $\eps$ is small enough.
Since \eqref{eqz} possesses neither bounded nor unbounded 
steady states above $w$ due to \cite{Quniq},
there exists $r_0>0$ such that
${\hat w}(r_0,t)\to\infty$ as $t\to\infty$.
Since ${\hat w}_r\leq0$, 
we have $\inf_{0<r<r_0}{\hat w}(r,t)={\hat w}(r_0,t)\to\infty$ as $t\to\infty$,
and comparison with the corresponding boundary value problem
in $(0,r_0)$ (with boundary conditions $w_r(0,t)=w(r_0,t)=0$)
and a Kaplan-type argument easily guarantee blow-up of $\hat w$,
which concludes the proof.

%--------------------------------------------
\vskip3mm
{\bf Acknowledgements.}
The author was supported in part by the Slovak Research and Development Agency 
under the contract No. APVV-14-0378 and by VEGA grant 1/0319/15.

%----------------------------
\def\by{\relax}    
\def\paper{\relax} 
\def\jour{\textit} 
\def\vol{\textbf}  
\def\yr#1{\rm(#1)} 
\def\pages{\relax} 
\def\book{\textit} 
\def\inbook{In: \textit}
\def\finalinfo{\rm} 
%----------------------------

\bibliographystyle{amsplain}

\end{document}